\newtheorem{prop}{Proposition}[section]
\newtheorem{lemma}[prop]{Lemma}
\newtheorem{theo}[prop]{Theorem}
\newtheorem{coroll}[prop]{Corollary}
\theoremstyle{definition}
 \newtheorem{remark}[prop]{Remark}
 \newtheorem{example}[prop]{Example}
\newcommand{\pmat}[1]{\begin{pmatrix}#1\end{pmatrix}}
\newcommand{\set}[2]{\left\{#1\,\middle|\,#2\right\}}
\newcommand{\iprod}[2]{\langle#1,#2\rangle}
\newcommand{\bigiprod}[2]{\left\langle#1,#2\right\rangle}
\DeclareMathOperator{\Real}{Re}
\DeclareMathOperator{\dist}{dist}
\DeclareMathOperator{\diag}{diag}
\newcommand{\N}{\mathbb{N}}
\newcommand{\R}{\mathbb{R}}
\newcommand{\C}{\mathbb{C}}
\newcommand{\bbP}{\mathbb{P}}
\newcommand{\mdef}{\mathcal{D}}
\newcommand{\range}{\mathcal{R}}
\newcommand{\eps}{\varepsilon}
\newcommand{\e}{\mathrm{e}}
\newcommand{\iu}{\mathrm{i}}
\newcommand{\D}{\mdef}
\newcommand{\mlin}{\mathcal{L}}
\newcommand{\A}{\mathcal{A}}
\newcommand{\cP}{\mathcal{P}}
\newcommand{\sigmaapp}{\sigma_{\mathrm{app}}}
\newcommand{\lv}[1]{\textcolor{black}{#1}}
\newcommand{\cw}[1]{\textcolor{black}{#1}}
\newcommand{\bj}[1]{\textcolor{black}{#1}}
\title{Pseudospectrum enclosures by discretization}
\author{Andreas Frommer, Birgit Jacob, Lukas Vorberg,\\ Christian Wyss and  Ian Zwaan\thanks{University of Wuppertal, Faculty of Mathematics and Natural Sciences, Department of Mathematics and Computer Sciences, IMACM, D-42097 Wuppertal, Germany 
  (frommer@math.uni-wuppertal.de, jacob@math.uni-wuppertal.de, vorberg@uni-wuppertal.de, wyss@math.uni-wuppertal.de, zwaan@math.uni-wuppertal.de).
  }
}
\date{}
\begin{document}
\maketitle

\begin{abstract}
  A new method to enclose the pseudospectrum
  via the numerical range of the inverse of a matrix or linear operator
  is presented. 
  The method is applied to finite-dimensional
  discretizations of an  operator on an infinite-dimensional Hilbert space,
  and convergence results for different approximation schemes are obtained,
  including finite element methods.
  We show that the pseudospectrum of the full operator
  is contained in an intersection of sets which are expressed in terms
  of the numerical ranges of shifted inverses of the 
  approximating matrices.
  The results are illustrated by means of two examples:
  the advection-diffusion operator and the Hain-L\"ust operator.
\end{abstract}

\section{Introduction}

Traditional stability analysis of linear dynamic models is based on eigenvalues. Thus 
determining the eigenvalues of a  matrix or, more generally, the spectrum of a 
linear operator is a major task in analysis and numerics. The explicit computation of the whole spectrum of a linear operator by analytical or numerical techniques
is only possible  in rare cases. Moreover, the spectrum is in general quite sensitive
with respect to small perturbations of the operator. This is in particular true for non-normal matrices and operators.
 Therefore, one is interested in supersets of the spectrum that are easier to compute
 and that are also robust under perturbations. One suitable superset is the $\varepsilon$-pseudospectrum, a notion which has been independently introduced by Landau \cite{Lan75}, Varah \cite{Var79}, Godunov \cite{KR85}, Trefethen \cite{Tre90} and Hinrichsen and Pritchard \cite{HP92}. The $\varepsilon$-pseudospectrum of a linear operator $A$ on a Hilbert space $H$ consists of the union of the spectra of all operators on $H$ of the form $A+P$ with $\|P\|<\varepsilon$.
 Besides the fact that the pseudospectrum is robust under perturbations,
 it is also suitable to determine the transient growth behavior of linear dynamic models in finite time, which may be far from the asymptotic behavior.
For an overview on the pseudospectrum and its applications we refer the reader to \cite{ETBook} and \cite{gateway}.

Numerical computation of the pseudospectrum of a matrix has been 
intensively studied in the literature. Most algorithms use   simple grid-based methods,
where one computes the smallest singular value of $A-z$
at the points $z$ of a grid, or 
path-following methods, see the survey  \cite{Tre99a} or the overview at \cite{gateway}. 
Both methods face several challenges.
The main problem of grid-based methods is first to find a suitable region in the
complex plane and then to perform the computation on a usually very
large number of grid points.
The main difficulty of path-following algorithms is to find a starting point, that is, a point on the boundary of the pseudospectrum.
Moreover, as the pseudospectrum may be disconnected it is difficult to find every component. 
However, there are several speedup techniques available, see  \cite{Tre99a}, which are essential for applications. 


\lv{A simple method to enclose the pseudospectrum is in terms of the numerical range. More precisely, under an additional weak assumption, the $\eps$-pseudospectrum is contained in an $\eps$-neighborhood of the numerical range of the operator, see Remark \ref{rem:NumericalRange}. While this superset is easy to compute for matrices, it can not distinguish disconnected components of the pseudospectrum as the numerical range is convex.}

In this article we propose a new method to enclose the pseudospectrum via the numerical range of the inverse of the matrix or linear operator.  
More precisely, for a linear operator $A$ on a Hilbert space and $\varepsilon>0$ we show 
\begin{equation}\label{eqn:enclusion}
  \sigma_\eps(A)\subset\bigcap_{s\in S} \left[\bigl(B_{\delta_s}(W((A-s)^{-1}))\bigr)^{-1}+s\right],
\end{equation}
see Theorem~\ref{theo:psincl-shift}.
Here $\sigma_\eps(A)$ denotes the $\eps$-pseudospectrum of $A$,
$W((A-s)^{-1})$ is the numerical range of the resolvent operator $(A-s)^{-1}$,
$B_{\delta_s}(U)$ is the ${\delta_s}$-neighborhood of a set $U$,
and $S$ is a suitable subset of the complex plane.
This inclusion holds for matrices as well as for linear operators on
Hilbert spaces. \bj{Further, we show that the enclosure of the
  pseudospectrum in \eqref{eqn:enclusion} becomes optimal if the set $S$ is chosen optimally, see Theorem \ref{theo:optimality}.} The idea to study the numerical range of the inverses stems from the fact that the spectrum of a matrix can be expressed in terms of inverses of shifted matrices  \cite{Hochstenbach}.

From a numerical point of view this new method faces
similar challenges as grid-based methods as a suitable set $S$ of
points has to be found and then the numerical ranges of a large number of
matrices have to be computed.
However, this new method has the advantage that it enables us to
enclose the pseudospectrum of an infinite-dimensional operator by a
set which is expressed by the approximating matrices.

The usual procedure to compute the pseudospectrum of a linear operator
on an infinite-dimensional Hilbert space is to approximate it by
matrices and then to calculate the pseudospectrum of one of the
approximating matrices. In \cite[Chapter 43]{ETBook} spectral methods
are used for the approximation, but no convergence properties of the
pseudospectrum under discretization are proved. So far only few
results are available concerning the relations between the
pseudospectra of the discretized operator and those of the
infinite-dimensional operator.
Convergence properties of the pseudospectrum under discretization have
been studied for the linearized Navier-Stokes equation \cite{Gerecht},
for band-dominated bounded operators \cite{Lindner} and for Toeplitz
operators \cite{Boettcher}.
B\"ogli and Siegl \cite{boegli18,boegli-siegl14}
prove local and global convergence of the pseudospectra
of a sequence of linear operators which converge in a generalized resolvent sense.
Further, Wolff \cite{wolff} shows some
abstract convergence results for the approximate point spectrum of a
linear operator using the pseudospectra of the approximations.

In this article we refine the enclosure \eqref{eqn:enclusion} of the
pseudospectrum of linear operators further
and show that it is
sufficient to calculate the numerical ranges of approximating
matrices.
More precisely,
we show in Theorem~\ref{theo:psincl-str-shift} that
\begin{equation}\label{eqn:enclusion2}
  \sigma_\eps(A)\subset\bigcap_{s\in S} \left[\bigl(B_{\delta_s}(W((A_n-s)^{-1}))\bigr)^{-1}+s\right]
\end{equation}
if  $n$ is sufficiently large.
Here $A_n$ is a sequence of matrices which approximates the operator
$A$ strongly. We refer to Section \ref{sec:strappr} for the precise
definition of strong approximation. If we even have a uniform
approximation of the operator $A$, then we are able to prove an
estimate for the index $n$ such that \eqref{eqn:enclusion2} holds in
intersections with compact subsets of the complex plane,
see Section~\ref{sec:unifap}.
In Section~\ref{sec:fe-discr} we show that
finite element discretizations of elliptic partial differential
operators yield uniform approximations. Further, as an example of
strong approximation we study in Section~\ref{sec:discr-block}
a class of structured block operator
matrices.  In the final section we apply our obtained results to the
advection-diffusion operator and the Hain-L\"ust operator.

We conclude this introduction with some remarks on the notation used.
Let $H$ be a Hilbert space. Throughout this article we assume that
$A:\D(A)\subset H\rightarrow H$ is a closed, densely defined,
linear operator.
We denote the range of $A$ by $\range(A)$
and the spectrum by $\sigma(A)$.
The resolvent set is $\varrho(A)=\C\backslash \sigma(A)$.
Let $\mlin(H_1, H_2)$ denote the set of
linear, bounded operators from the Hilbert space $H_1$ to the Hilbert
space $H_2$.
The operator norm of $T\in \mlin(H_1, H_2)$ will be
denoted by $\|T\|_{\mlin(H_1, H_2)}$.
To shorten notation, we write
$\mlin(H)=\mlin(H, H)$ and denote the operator norm of $T\in\mlin(H)$
by $\|T\|$.
The identity operator is denoted by $I$.
For every
$\lambda\in\varrho(A)$, the resolvent $(A-\lambda)^{-1}:=(A-\lambda I)^{-1}$ satisfies
$(A-\lambda)^{-1}\in\mlin(H)$.
For a set of complex numbers $S\subset\C$
we denote the $\delta$-neighborhood by $B_\delta(S)$, i.e.,
\(B_\delta(S)=\set{z\in\C}{\dist(z,S)<\delta}\),
and we also use the notation
\(S^{-1}=\set{z^{-1}}{z\in S\setminus\{0\}}\). Further, we use the notation $\mathbb C^*:=\mathbb C\backslash\{0\}$.

\section{Pseudospectrum enclosures using the numerical range}
\label{sec:psincl}

In this section
we present the basic idea of considering numerical ranges of
shifted inverses of an operator in order to obtain an
enclosure of its pseudospectrum.
We start by recalling the notions of the numerical range and the
$\eps$-pseudo\-spectrum.

The \emph{numerical range} of an operator $A$ is defined as the set
\[W(A)=\set{\iprod{Ax}{x}}{x\in\mdef(A),\,\|x\|=1},\]
see e.g.\ \cite{Kato95}.
It is always a convex set and, if $A$ is additionally bounded, then $W(A)$
is bounded too.
\cw{The \emph{numerical radius} is $w(A)=\sup\set{|z|}{z\in W(A)}$.}
The numerical range satisfies  the inclusions
\[\sigma_p(A)\subset W(A),\qquad \sigmaapp(A)\subset\overline{W(A)},\]
where $\sigma_p(A)$ is the point spectrum of $A$, i.e., the set of all
eigenvalues and $\sigmaapp(A)$ is the so-called
approximate point spectrum defined by
\[\sigmaapp(A)=\set{\lambda\in\C}{\exists x_n\in\mdef(A),\,
  \|x_n\|=1 : \lim_{n\to\infty}(A-\lambda)x_n=0}.\]
The spectrum, point spectrum and approximate point spectrum are related by
$\sigma_p(A)\subset \sigmaapp(A)\subset\sigma(A)$.
If $A$ has a compact resolvent, then the spectrum consists of eigenvalues
only and hence we have equality.

For $\eps>0$ the \emph{$\eps$-pseudospectrum} of $A$ is given by
\[\sigma_\eps(A)=\sigma(A)\cup\set{\lambda\in\varrho(A)}{\|(A-\lambda)^{-1}\|>\frac{1}{\eps}}.\]
If we understand $\|(A-\lambda)^{-1}\|$ to be infinity for $\lambda\in\sigma(A)$,
then this can be shortened to
\[\sigma_\eps(A)=\set{\lambda\in\C}{\|(A-\lambda)^{-1}\|>\frac{1}{\eps}}.\]
Hence
\[\C\setminus\sigma_\eps(A)=\set{\lambda\in\varrho(A)}{\|(A-\lambda)^{-1}\|\leq\frac{1}{\eps}}.\]

The central idea of this article is the following:
If $\lambda\in\C$ is such that $1/\lambda$ has a certain positive distance $\delta$
to the numerical range
of the inverse operator $A^{-1}$, then this yields an estimate of the
form
\[\|(A-\lambda)x\|\geq \eps\|x\|,\qquad x\in\mdef(A),\]
with some constant $\eps>0$, which will in turn be used to show $\lambda\in\varrho(A)$
with $\|(A-\lambda)^{-1}\|\leq\frac{1}{\eps}$, i.e., $\lambda\not\in\sigma_\eps(A)$.
This is made explicit with the next proposition:
\begin{prop}\label{prop:psincl}
  Suppose that $0\in \varrho(A)$. Then for every \lv{$0<\eps <  \frac{1}{\|A^{-1}\|}$} and
  \lv{$\delta=\frac{\|A^{-1}\|^2\eps}{1-\|A^{-1}\|\eps}$} we have
  \[\sigma_\eps(A)\subset\bigl(B_\delta(W(A^{-1}))\bigr)^{-1}.\]
\end{prop}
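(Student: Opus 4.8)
The plan is to prove the contrapositive: I fix $\lambda\in\C$ lying outside the target set $(B_\delta(W(A^{-1})))^{-1}$ and show $\lambda\notin\sigma_\eps(A)$, i.e.\ $\lambda\in\varrho(A)$ with $\|(A-\lambda)^{-1}\|\le 1/\eps$. First I would dispose of $\lambda=0$: since $0\in\varrho(A)$ and $\|A^{-1}\|<1/\eps$ by the hypothesis $\eps<1/\|A^{-1}\|$, the point $0$ is not in $\sigma_\eps(A)$, so only $\lambda\neq0$ matters. For such $\lambda$, writing $\mu=1/\lambda$, the assumption $\lambda\notin(B_\delta(W(A^{-1})))^{-1}$ translates to $\dist(\mu,W(A^{-1}))\ge\delta$, and this is what I will exploit.

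The key algebraic observation is the resolvent identity linking the two shifted operators. On $\mdef(A)$ one computes $(A^{-1}-\mu)Ax=x-\mu Ax=-\mu(A-\lambda)x$, which suggests that $R:=-\mu A^{-1}(A^{-1}-\mu)^{-1}$ should be a two-sided inverse of $A-\lambda$. To make this rigorous I first invoke the standard fact that for a bounded operator $T$ and $\mu\notin\overline{W(T)}$ one has $\mu\in\varrho(T)$ together with the resolvent estimate $\|(T-\mu)^{-1}\|\le 1/\dist(\mu,\overline{W(T)})$; applied to $T=A^{-1}$ and using $\dist(\mu,\overline{W(A^{-1})})=\dist(\mu,W(A^{-1}))\ge\delta$, this makes $R$ a well-defined bounded operator. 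I would then verify directly that $(A-\lambda)R=I$ and $R(A-\lambda)=I$ on $\mdef(A)$, which yields $\lambda\in\varrho(A)$ together with the formula $(A-\lambda)^{-1}=-\mu A^{-1}(A^{-1}-\mu)^{-1}$, so that $\|(A-\lambda)^{-1}\|\le|\mu|\,\|A^{-1}\|\,\|(A^{-1}-\mu)^{-1}\|$.

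It remains to bound the right-hand side by $1/\eps$, and here lies the main obstacle: the naive estimate $\|(A^{-1}-\mu)^{-1}\|\le 1/\delta$ only controls $|\mu|\,\|A^{-1}\|/\delta$, which exceeds $1/\eps$ once $|\mu|$ is large (equivalently $\lambda$ close to $0$). The remedy is to use a second resolvent bound in that regime, namely $\|(A^{-1}-\mu)^{-1}\|\le(|\mu|-\|A^{-1}\|)^{-1}$ valid for $|\mu|>\|A^{-1}\|$, and to split the argument at the threshold $|\mu|=\|A^{-1}\|+\delta$. A short computation using $\delta=\|A^{-1}\|^2\eps/(1-\|A^{-1}\|\eps)$ shows that $\|A^{-1}\|+\delta=\|A^{-1}\|/(1-\|A^{-1}\|\eps)$ and that both the numerical-range bound (for $|\mu|\le\|A^{-1}\|+\delta$) and the Neumann-type bound (for $|\mu|\ge\|A^{-1}\|+\delta$, where the relevant quotient is monotonically decreasing in $|\mu|$) yield exactly $|\mu|\,\|A^{-1}\|\,\|(A^{-1}-\mu)^{-1}\|\le 1/\eps$ at the junction. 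The delicate point is thus not any single estimate but the verification that the two regimes meet precisely at $\|A^{-1}\|+\delta$ for the prescribed value of $\delta$; granting this, $\|(A-\lambda)^{-1}\|\le 1/\eps$ in all cases and the contrapositive is complete.
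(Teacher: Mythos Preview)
Your proof is correct and follows a genuinely different route from the paper, though the underlying estimates coincide. The paper argues pointwise: it shows $\|(A-\lambda)x\|\ge\eps$ for all unit $x\in\mdef(A)$ by the same two-case split (the threshold $|\lambda|=\|A^{-1}\|^{-1}-\eps$ is exactly your $|\mu|=\|A^{-1}\|+\delta$), concludes $\lambda\notin\sigmaapp(A)$, and then needs a separate topological step---the connectedness of $\C\setminus U$ together with $\partial\sigma(A)\subset\sigmaapp(A)$ and $0\in\varrho(A)\cap(\C\setminus U)$---to upgrade this to $\lambda\in\varrho(A)$. You instead construct the resolvent explicitly via $(A-\lambda)^{-1}=-\mu A^{-1}(A^{-1}-\mu)^{-1}$ and push the topological work into the standard fact $\sigma(T)\subset\overline{W(T)}$ for the \emph{bounded} operator $T=A^{-1}$. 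Your route is arguably cleaner, since the connectedness argument is absorbed into a well-known lemma rather than carried out by hand; the paper's route is more self-contained and makes the mechanism visible. Both yield the identical resolvent bound, and the verification that the two regimes meet at $|\mu|=\|A^{-1}\|+\delta=\|A^{-1}\|/(1-\|A^{-1}\|\eps)$ with value exactly $1/\eps$ is the same computation in either approach.
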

\begin{proof}
  Let us denote
  $U=\bigl(B_\delta(W(A^{-1}))\bigr)^{-1}$.
  As a first step we show that
  \begin{equation}\label{eq:regulartype}
    \|(A-\lambda)x\|\geq\eps \quad\text{for all}\quad
    \lambda\in\C\setminus U,\,x\in\mdef(A),\,\|x\|=1.
  \end{equation}
  So let $\lambda\in \C\setminus U$.
  We consider two cases. First suppose that \lv{$|\lambda|>\frac{1}{\|A^{-1}\|}-\eps$}.
  Then $\lambda\neq0$, $\lambda^{-1}\not\in B_{\delta}(W(A^{-1}))$ and hence
  $\dist(\lambda^{-1},W(A^{-1}))\geq\delta$.
  For $x\in\mdef(A)$, $\|x\|=1$ we find
  \[\delta\leq|\lambda^{-1}-\iprod{A^{-1}x}{x}|
  =|\iprod{(\lambda^{-1}-A^{-1})x}{x}|
  \leq\|(\lambda^{-1}-A^{-1})x\|.\]
  Consequently
  \begin{align*}
    \|(A-\lambda)x\|&=|\lambda|\|A(\lambda^{-1}-A^{-1})x\|
    \geq\frac{|\lambda|}{\|A^{-1}\|}\|(\lambda^{-1}-A^{-1})x\|\\
    &\geq\lv{\frac{\delta}{\|A^{-1}\|}\left(\frac{1}{\|A^{-1}\|}-\eps\right)
    = \frac{\delta(1-\|A^{-1}\|\eps)}{\|A^{-1}\|^2}}=\eps.
  \end{align*}
  In the other case if \lv{$|\lambda|\leq\frac{1}{\|A^{-1}\|}-\eps$
  then $|\lambda|\|A^{-1}\|\leq 1-\|A^{-1}\|\eps$ and hence} $I-\lambda A^{-1}$ is invertible by a Neumann series argument
  with \lv{$\|(I-\lambda A^{-1})^{-1}\|\leq\frac{1}{\|A^{-1}\|\eps}$}.
  For $x\in\mdef(A)$, $\|x\|=1$ this implies
  \[\|(A-\lambda)x\|=\|A(I-\lambda A^{-1})x\|
  \geq\frac{1}{\|A^{-1}\|\|(I-\lambda A^{-1})^{-1}\|}\geq\eps.\]
  We have thus shown \eqref{eq:regulartype}.
  In particular, $\lambda\in \C\setminus U$ implies $\lambda\not\in\sigmaapp(A)$, i.e.,
  \begin{equation}\label{eq:appdisj}
    \sigmaapp(A)\cap\C\setminus U=\varnothing.
  \end{equation}
  Since $B_\delta(W(A^{-1}))$ is convex and bounded,
  the set $\C^*\setminus B_\delta(W(A^{-1}))$ is connected and hence also
  \[\C^*\setminus U=\left(\C^*\setminus B_\delta(W(A^{-1}))\right)^{-1},\]
  the image under the homeomorphism $\C^*\to\C^*$, $z\mapsto z^{-1}$.
  On the other hand, the boundedness of $B_\delta(W(A^{-1}))$ implies that
  a neighborhood around 0 belongs to
  $\C\setminus U=(\C^*\setminus U) \cup \{0\}$.
  Consequently, the set $\C\setminus U$ is connected and satisfies
  $0\in\varrho(A)\cap\C\setminus U$.
  Using \eqref{eq:appdisj} and the fact that
  $\partial \sigma(A)\subset\sigmaapp(A)$, we conclude that
  \[\C\setminus U\subset\varrho(A).\]
  Here $\partial \sigma(A)$ denotes the boundary of the spectrum of $A$.
  Now \eqref{eq:regulartype} implies that if $\lambda\in\C\setminus U$ then
  $\|(A-\lambda)^{-1}\|\leq\frac{1}{\eps}$ 
  and therefore we obtain $\lambda\not\in\sigma_\eps(A)$.
\end{proof}

Applying the last result to the shifted operator $A-s$ and then taking
the intersection over a suitable set of shifts, we obtain
our first main result on an enclosure of the pseudospectrum:
\begin{theo}\label{theo:psincl-shift}
  Consider a set $S\subset\varrho(A)$ such that
  \[M:=\sup_{s\in S}\|(A-s)^{-1}\|<\infty.\]
  Then for \lv{$0<\eps < \frac{1}{M}$} we get the inclusion
  \begin{align}\label{eq:psincl-shift}
  \sigma_\eps(A)\subset\bigcap_{s\in S}
  \left[\bigl(B_{\delta_s}(W((A-s)^{-1}))\bigr)^{-1}+s\right]
  \end{align}
  where \lv{$\delta_s=\frac{\|(A-s)^{-1}\|^2\eps}{1-\|(A-s)^{-1}\|\eps}$}.
\end{theo}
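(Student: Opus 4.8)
The plan is to reduce this statement to Proposition~\ref{prop:psincl}, applied separately to each shifted operator $A-s$, and then to intersect the resulting inclusions over $s\in S$. The conceptual backbone is the translation invariance of the pseudospectrum: since $(A-s)-\lambda = A-(s+\lambda)$, the resolvent norms coincide, $\|((A-s)-\lambda)^{-1}\|=\|(A-(s+\lambda))^{-1}\|$, whence $\lambda\in\sigma_\eps(A-s)$ if and only if $s+\lambda\in\sigma_\eps(A)$. In other words $\sigma_\eps(A)=\sigma_\eps(A-s)+s$ for every $s\in\C$.

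Next I would verify that Proposition~\ref{prop:psincl} is applicable to the operator $A-s$ for each $s\in S$. The proposition requires $0\in\varrho(A-s)$, which holds exactly because $s\in S\subset\varrho(A)$. It also requires the parameter $\eps$ to lie below $1/\|(A-s)^{-1}\|$. This is precisely where the uniform bound $M$ enters: since $\|(A-s)^{-1}\|\leq M$ for all $s\in S$, the single hypothesis $0<\eps<1/M$ guarantees $0<\eps<1/\|(A-s)^{-1}\|$ simultaneously for every shift. With this value of $\eps$, the proposition determines the neighborhood radius to be $\delta_s=\frac{\|(A-s)^{-1}\|^2\eps}{1-\|(A-s)^{-1}\|\eps}$, matching the statement, and yields
\[
  \sigma_\eps(A-s)\subset\bigl(B_{\delta_s}(W((A-s)^{-1}))\bigr)^{-1}.
\]

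Combining the two ingredients, for each fixed $s\in S$ I would translate the inclusion by $s$:
\[
  \sigma_\eps(A)=\sigma_\eps(A-s)+s\subset\bigl(B_{\delta_s}(W((A-s)^{-1}))\bigr)^{-1}+s.
\]
Since the left-hand side does not depend on $s$ while each right-hand side is a valid superset, the inclusion persists under intersecting over all $s\in S$, which is exactly \eqref{eq:psincl-shift}.

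As for the main obstacle: there is no substantial analytic difficulty here, as the statement is essentially a direct corollary of Proposition~\ref{prop:psincl}. The only point requiring genuine care is making the threshold on $\eps$ uniform over the (possibly infinite) set $S$; this is the reason for imposing the finite uniform bound $M$, which is what lets one fixed $\eps$ serve all shifts at once. Everything else is bookkeeping with the translation $z\mapsto z+s$ and the inversion $z\mapsto z^{-1}$ already encoded in Proposition~\ref{prop:psincl}.
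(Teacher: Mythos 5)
Your proposal is correct and follows exactly the paper's own argument: apply Proposition~\ref{prop:psincl} to each shifted operator $A-s$ (the hypotheses being met since $s\in\varrho(A)$ and $\eps<1/M\leq 1/\|(A-s)^{-1}\|$), use $\sigma_\eps(A)=\sigma_\eps(A-s)+s$, and intersect over $s\in S$. No differences worth noting.
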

\begin{proof}
  For every $s\in S$ we can apply Proposition~\ref{prop:psincl}
  to the operator $A-s$ and obtain
  \[\sigma_\eps(A)-s=\sigma_\eps(A-s)\subset\bigl(B_{\delta_s}(W((A-s)^{-1}))\bigr)^{-1}.\qedhere \]
\end{proof}

The following simple example demonstrates
that the
$\delta$-neighborhood around the numerical range
is actually needed to obtain an enclosure of the
pseudospectrum.

\begin{example}
Let $A=\diag(-1+i,-1-i,1+i,1-i)\in\C^{4\times 4}$.
Then $A^{-1}=\frac12\diag(-1-i,-1+i,1-i,1+i)$.
Since $A^{-1}$ is normal, its numerical range is simply the convex hull of
its eigenvalues.
Thus $W(A^{-1})$ is the following square:
\begin{center}
  \begin{tikzpicture}
    \fill[lightgray] (-.5,-.5) rectangle(.5,.5);
    \draw (.5,.5)--(-.5,.5)--(-.5,-.5)--(.5,-.5)--(.5,.5);
    \fill (.5,.5) circle(2pt);
    \fill (-.5,.5) circle(2pt);
    \fill (-.5,-.5) circle(2pt);
    \fill (.5,-.5) circle(2pt);

    \draw (1,-2pt)--(1,2pt);
    \draw (1,-.3) node{$1$};
    \draw (-1,-2pt)--(-1,2pt);
    \draw (-1,-.3) node{$-1$};
    \draw (-2pt,1)--(2pt,1);
    \draw (-.2,1) node{$1$};
    \draw (-2pt,-1)--(2pt,-1);
    \draw (-.4,-1) node{$-1$};

    \draw[->] (-2,0)--(2,0);
    \draw[->] (0,-2)--(0,2);
  \end{tikzpicture}
\end{center}

Then, using the fact that $z\mapsto\frac{1}{z}$ is a M\"obius transformation,
we obtain for $W(A^{-1})^{-1}$ the following curve plus its exterior:
\begin{center}
  \begin{tikzpicture}
    \filldraw[fill=lightgray,draw=lightgray] (2.5,-2.5)--(1,-1) arc(-90:90:1) arc(0:180:1)
    --(-2.5,2.5)--(2.5,2.5)--cycle;
    \fill[lightgray] (-2.5,2.5)--(-1,1) arc(90:270:1) arc(-180:0:1)
    --(2.5,-2.5)--(-2.5,-2.5)--cycle;
    
    \draw (1,-1) arc(-90:90:1);
    \draw (1,1) arc(0:180:1);
    \draw (-1,1) arc(90:270:1);
    \draw (-1,-1) arc(-180:0:1);
    \fill (1,1) circle(2pt);
    \fill (-1,1) circle(2pt);
    \fill (-1,-1) circle(2pt);
    \fill (1,-1) circle(2pt);

    \draw (1,-2pt)--(1,2pt);
    \draw (1,-.3) node{$1$};
    \draw (2,-2pt)--(2,2pt);
    \draw (2.2,-.3) node{$2$};
    \draw (-1,-2pt)--(-1,2pt);
    \draw (-1,-.3) node{$-1$};
    \draw (-2,-2pt)--(-2,2pt);
    \draw (-2.3,-.3) node{$-2$};

    \draw (-2pt,1)--(2pt,1);
    \draw (-.2,1) node{$1$};
    \draw (-2pt,-1)--(2pt,-1);
    \draw (-.4,-1) node{$-1$};

    \draw[->] (-3,0)--(3,0);
    \draw[->] (0,-3)--(0,3);
  \end{tikzpicture}
\end{center}
We see that $W(A^{-1})^{-1}$ touches the spectrum of $A$.
This is of course clear: if an eigenvalue $1/\lambda$ of $A^{-1}$ is on the boundary
of $W(A^{-1})$, then the eigenvalue $\lambda$ of $A$ is on the boundary
of $W(A^{-1})^{-1}$.
In particular in this example we do not have $\sigma_\eps(A)\subset W(A^{-1})^{-1}$
for any $\eps>0$ since $\sigma_\eps(A)$ contains discs with radius $\eps$ around the
eigenvalues.
\end{example}

\lv{
\begin{prop}\label{prop:rho}
  For $s\in\varrho(A)$, $0<\eps<\frac{1}{\|(A-s)^{-1}\|}$ and
  $\delta_s = \frac{\|(A-s)^{-1}\|^2\eps}{1-\|(A-s)^{-1}\|\eps}$ we have that
  \begin{align*}
    \overline{B_{\rho_s}(s)} \cap
    \left[\bigl(B_{\delta_{{s}}}(W((A-{s})^{-1}))\bigr)^{-1}+{s}\right]
    = \varnothing
  \end{align*}
  where
  \(\rho_s \cw{= \frac{1}{w((A-s)^{-1})+\delta_s}}
  \geq\frac{1}{\|(A-s)^{-1}\|+\delta_s}\).
\end{prop}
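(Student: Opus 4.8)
The plan is to reduce the claim to an elementary statement about moduli after translating by $-s$. Writing $R=(A-s)^{-1}$ for brevity, both sets appearing in the intersection are of the form ``something $+s$'', so subtracting $s$ shows the assertion is equivalent to
\[
  \overline{B_{\rho_s}(0)} \cap \bigl(B_{\delta_s}(W(R))\bigr)^{-1} = \varnothing.
\]
Hence it suffices to show that every point of $\bigl(B_{\delta_s}(W(R))\bigr)^{-1}$ has modulus \emph{strictly} larger than $\rho_s$, while every point of the closed disc $\overline{B_{\rho_s}(0)}$ has modulus at most $\rho_s$; the two sets are then forced to be disjoint.

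The first key step is to trap $B_{\delta_s}(W(R))$ inside a single disc about the origin. By definition of the numerical radius we have $|w|\le w(R)$ for all $w\in W(R)$. Therefore, given any $z\in B_{\delta_s}(W(R))$, there is some $w\in W(R)$ with $|z-w|<\delta_s$, and the triangle inequality yields $|z|\le|w|+|z-w|<w(R)+\delta_s$, so that
\[
  B_{\delta_s}(W(R)) \subseteq \bigset{z\in\C}{|z| < w(R)+\delta_s}.
\]
The second step is to invert: for a nonzero $z\in B_{\delta_s}(W(R))$ the bound $|z|<w(R)+\delta_s$ gives
\[
  |z^{-1}| > \frac{1}{w(R)+\delta_s} = \rho_s,
\]
so every element of $\bigl(B_{\delta_s}(W(R))\bigr)^{-1}$ lies strictly outside $\overline{B_{\rho_s}(0)}$. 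This establishes the disjointness, and translating back by $+s$ proves the displayed identity. The final inequality $\rho_s\ge\frac{1}{\|(A-s)^{-1}\|+\delta_s}$ is then immediate from the elementary relation $w(R)\le\|R\|$ between the numerical radius and the operator norm.

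There is no genuine obstacle here: the statement follows directly from the definition of the numerical radius together with the behaviour of moduli under the map $z\mapsto z^{-1}$. The only point that requires a little care is to keep the inequality $|z|<w(R)+\delta_s$ strict, so that the inverted points satisfy $|z^{-1}|>\rho_s$ strictly and thus avoid even the boundary circle of the \emph{closed} disc $\overline{B_{\rho_s}(0)}$; this strictness is guaranteed precisely because $B_{\delta_s}$ denotes an open neighborhood.
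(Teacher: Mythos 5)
Your proof is correct and takes essentially the same route as the paper's: the paper likewise picks $t$ in the shifted inverted set, bounds $\frac{1}{|t-s|}<\delta_s+w((A-s)^{-1})=\frac{1}{\rho_s}$ using the numerical radius, and concludes $|t-s|>\rho_s$. Your explicit attention to the strictness of the inequality (coming from the openness of $B_{\delta_s}$) is exactly the point that makes the intersection with the \emph{closed} ball empty.
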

\begin{proof}
  Let $s\in \varrho(A)$ and $t\in \bigl(B_{\delta_s}(W((A-s)^{-1}))\bigr)^{-1}+s$. Then
  \begin{align*}
    \frac{1}{t-s}\in B_{\delta_s}(W((A-s)^{-1}))
  \end{align*}
  and we can estimate
  \begin{align*}
    \frac{1}{|t-s|} &< \delta_s + \sup_{\|x\|=1}|\langle(A-s)^{-1}x,x\rangle|
    \cw{=\delta_s+w((A-s)^{-1})}
    = \frac{1}{\rho_s}.
  \end{align*}
  This implies $|t-s|>\rho_s$ and therefore $t\notin\overline{B_{\rho_s}(s)}$.
\end{proof}
}

\lv{The following theorem shows that the enclosure of the
pseudospectrum in Theorem \ref{theo:psincl-shift} becomes optimal if the
shifts are chosen optimally.
}

\lv{
\begin{theo}\label{theo:optimality}
Let $\eps>0$, $S_\gamma = \set{s\in \varrho(A)}{\|(A-s)^{-1}\|=\frac{1}{\eps+\gamma}}$ for $\gamma>0$ and $\delta_s=\frac{\|(A-s)^{-1}\|^2\eps}{1-\|(A-s)^{-1}\|\eps}$. Then:
\begin{enumerate}
{\item
\abovedisplayskip=-\baselineskip
\belowdisplayskip=0pt
\abovedisplayshortskip=-\baselineskip
\belowdisplayshortskip=0pt
\begin{align*}
\sigma_\eps(A) &\subset \bigcap_{\gamma>0}\bigcap_{s\in S_\gamma}\left[\left(B_{\delta_s}(W((A-s)^{-1}))\right)^{-1}+s\right]\\
&\subset \set{\lambda\in \C}{\|(A-\lambda)^{-1}\|\geq \frac{1}{\eps}},
\end{align*}}
{\item
\abovedisplayskip=-\baselineskip
\belowdisplayskip=0pt
\abovedisplayshortskip=-\baselineskip
\belowdisplayshortskip=0pt
\begin{align*}
\set{\lambda\in\C}{\|(A-\lambda)^{-1}\|\geq\frac{1}{\eps}} = \bigcap_{\gamma>0}\bigcap_{s\in S_\gamma}\left[\left(\overline{B_{\delta_s}(W((A-s)^{-1}))}\right)^{-1}+s\right],
\end{align*}}
\item Under the additional assumption that $A$ is normal with compact resolvent and $L>0$, there exists an $\eps_0>0$ such that for all $\eps<\eps_0$
\begin{align*}
\sigma_\eps(A)\cap\overline{B_L(0)} = \bigcap_{\gamma>0}\bigcap_{s\in S_\gamma}\left[\left(B_{\delta_s}(W((A-s)^{-1}))\right)+s\right]\cap\overline{B_L(0)}.
\end{align*}
\end{enumerate}
\end{theo}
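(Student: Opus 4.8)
The plan is to obtain all three parts from Proposition~\ref{prop:psincl}/Theorem~\ref{theo:psincl-shift} applied to single shifts, together with the trivial but decisive fact that $0$ never belongs to a set of the form $V^{-1}$. For part~(a), observe that $\|(A-s)^{-1}\|=\tfrac{1}{\eps+\gamma}$ is constant on $S_\gamma$, so $M=\sup_{s\in S_\gamma}\|(A-s)^{-1}\|=\tfrac{1}{\eps+\gamma}$ and $\eps<\eps+\gamma=\tfrac1M$; Theorem~\ref{theo:psincl-shift} with $S=S_\gamma$ then gives $\sigma_\eps(A)\subset\bigcap_{s\in S_\gamma}\bigl[\bigl(B_{\delta_s}(W((A-s)^{-1}))\bigr)^{-1}+s\bigr]$ for every $\gamma>0$, and intersecting over $\gamma$ yields the first inclusion. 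For the second inclusion I argue by contraposition: if $\lambda\in\varrho(A)$ with $\|(A-\lambda)^{-1}\|<\tfrac1\eps$, put $\gamma_0=\|(A-\lambda)^{-1}\|^{-1}-\eps>0$, so $\lambda\in S_{\gamma_0}$; taking the shift $s=\lambda$ in the $\gamma_0$-term, membership would force $0=\lambda-\lambda\in\bigl(B_{\delta_\lambda}(W((A-\lambda)^{-1}))\bigr)^{-1}$, which is impossible. Hence $\lambda$ is excluded from the intersection.

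For part~(b), inclusion ``$\supseteq$'' is the same $s=\lambda$ trick with the closed neighbourhood (still $0\notin V^{-1}$; and if $\lambda\in\sigma(A)$ then $\|(A-\lambda)^{-1}\|=\infty\ge\tfrac1\eps$ trivially). The substantive inclusion is ``$\subseteq$''. Fix $\lambda$ with $\|(A-\lambda)^{-1}\|\ge\tfrac1\eps$ and any $s\in S_\gamma$. For each $\eps'\in(\eps,\eps+\gamma)$ we have $\|(A-\lambda)^{-1}\|>\tfrac{1}{\eps'}$, i.e.\ $\lambda\in\sigma_{\eps'}(A)$; applying Theorem~\ref{theo:psincl-shift} to the singleton $S=\{s\}$ at level $\eps'$ (admissible since $\eps'<\eps+\gamma=\|(A-s)^{-1}\|^{-1}$) gives $\tfrac{1}{\lambda-s}\in B_{\delta_s(\eps')}(W((A-s)^{-1}))$ with $\delta_s(\eps')=\tfrac{\|(A-s)^{-1}\|^2\eps'}{1-\|(A-s)^{-1}\|\eps'}$. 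Letting $\eps'\downarrow\eps$ and using continuity of $\eps'\mapsto\delta_s(\eps')$ yields $\dist\bigl(\tfrac{1}{\lambda-s},W((A-s)^{-1})\bigr)\le\delta_s$, i.e.\ $\lambda\in\bigl(\overline{B_{\delta_s}(W((A-s)^{-1}))}\bigr)^{-1}+s$; as $\gamma$ and $s$ were arbitrary, $\lambda$ lies in the intersection.

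For part~(c) (where the right-hand set is understood as the same enclosure $\Omega$ as in part~(a), with the outer inversion), normality gives $\|(A-\lambda)^{-1}\|=\dist(\lambda,\sigma(A))^{-1}$, so $\sigma_\eps(A)=\{\dist(\cdot,\sigma(A))<\eps\}$ and part~(a) squeezes $\Omega$ between $\sigma_\eps(A)$ and $\{\dist(\cdot,\sigma(A))\le\eps\}$; thus $\Omega\setminus\sigma_\eps(A)$ can only contain boundary points where the distance equals $\eps$. Let $d$ be the minimal distance between distinct eigenvalues in $\overline{B_{L+1}(0)}$ (finitely many, by compactness of the resolvent) and set $\eps_0=d/2$. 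For $\eps<\eps_0$ each boundary point $\lambda\in\overline{B_L(0)}$ has a unique nearest eigenvalue $\mu_0$, and I would exclude $\lambda$ from $\Omega$ via the shift $s=\mu_0+t(\lambda-\mu_0)$ with $t\in(1,\tfrac{d}{2\eps})$: then $\mu_0$ remains the unique nearest eigenvalue to $s$, so $\|(A-s)^{-1}\|=\tfrac{1}{t\eps}$, $s\in S_{(t-1)\eps}$ and $\delta_s=\tfrac{1}{t(t-1)\eps}$. Since $(A-s)^{-1}$ is normal, $\overline{W((A-s)^{-1})}=\overline{\operatorname{conv}}\,\sigma((A-s)^{-1})$; the spectral point $\tfrac{1}{\mu_0-s}$ has maximal modulus and realises the supporting line of this set in its own direction, while $\tfrac{1}{\lambda-s}$ lies on the same ray strictly beyond that line, at distance $\tfrac{1}{(t-1)\eps}-\tfrac{1}{t\eps}=\tfrac{1}{t(t-1)\eps}=\delta_s$. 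Hence $\dist\bigl(\tfrac{1}{\lambda-s},W((A-s)^{-1})\bigr)\ge\delta_s$ and $\lambda\notin\Omega$; removing all such boundary points proves the equality on $\overline{B_L(0)}$.

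The main obstacle is the geometry in part~(c): one must match the distance from $\tfrac{1}{\lambda-s}$ to $W((A-s)^{-1})$ with $\delta_s$ exactly, and—more delicately—guarantee through the smallness of $\eps$ relative to the spectral gap $d$ that $\mu_0$ stays the \emph{unique} closest eigenvalue to the constructed shift, since otherwise $\|(A-s)^{-1}\|$, and with it $\delta_s$, would change and the cancellation would fail. The limiting argument $\eps'\downarrow\eps$ in part~(b), needed to pass from open to closed neighbourhoods, is a lesser but genuine subtlety.
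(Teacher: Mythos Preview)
Your arguments for parts (a) and (b) are correct. In (a) and in the ``$\supseteq$'' half of (b) your observation that $0$ can never lie in a set of the form $V^{-1}$ is precisely the content of the paper's Proposition~\ref{prop:rho} specialised to the centre $s$ itself, so here you and the paper coincide. For the ``$\subseteq$'' half of (b) you take a genuinely different route: the paper simply repeats the proof of Proposition~\ref{prop:psincl} verbatim with non-strict inequalities to obtain the closed-ball version directly, whereas you deduce it from the open-ball statement via the limit $\eps'\downarrow\eps$. Your approach is shorter and avoids rewriting that proof; the paper's has the minor advantage of not needing the continuity of $\eps'\mapsto\delta_s(\eps')$.

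In part (c) there is a real gap in your choice of $\eps_0$. You set $\eps_0=d/2$ where $d$ is the minimal gap among eigenvalues in $\overline{B_{L+1}(0)}$, but nothing forces the nearest eigenvalue $\mu_0$ of a boundary point $\lambda\in\overline{B_L(0)}$ to lie in $\overline{B_{L+1}(0)}$: this would require $\eps\le 1$, which your $\eps_0$ does not guarantee. If $\mu_0$ sits just outside $\overline{B_{L+1}(0)}$ there can be a second eigenvalue $\mu'$ arbitrarily close to $\mu_0$ (both outside the ball, hence invisible to your $d$), and then your shift $s=\mu_0+t(\lambda-\mu_0)$ may be closer to $\mu'$ than to $\mu_0$ for \emph{every} $t>1$; the claimed identity $\|(A-s)^{-1}\|=1/(t\eps)$ fails and the whole cancellation collapses. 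The paper avoids this with a two-stage bootstrap: first $\eps_1$ is the half-gap for eigenvalues in $\overline{B_L(0)}$, then $\eps_0\le\eps_1$ is the half-gap for eigenvalues in the larger ball $\overline{B_{L+3\eps_1}(0)}$, which is guaranteed to contain $\mu_0$. Your final exclusion step (bounding $W((A-s)^{-1})$ by the disc of radius $\|(A-s)^{-1}\|$ and reading off the distance) is again exactly Proposition~\ref{prop:rho}, which the paper invokes by name.
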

\begin{proof}
\begin{enumerate}
\item The first inclusion follows from Theorem \ref{theo:psincl-shift}. In order to prove the second inclusion first note that
\begin{align*}
S_\gamma\cap \bigcap_{{s}\in S_\gamma}
  \left[\bigl(B_{\delta_{{s}}}(W((A-{s})^{-1}))\bigr)^{-1}+{s}\right] = \varnothing
\end{align*}
for every $\gamma>0$ by Proposition~\ref{prop:rho}. Hence,
\begin{align*}
\bigcap_{\gamma>0}\bigcap_{s\in S_\gamma}\left[\left(B_{\delta_s}(W((A-s)^{-1}))\right)^{-1}+s\right] \subset \bigcap_{\gamma>0}\C\setminus S_\gamma = \C\setminus\bigcup_{\gamma>0} S_\gamma\\
=\C\setminus\set{s\in\varrho(A)}{\|(A-s)^{-1}\|<\frac{1}{\eps}} = \set{s\in\C}{\|(A-s)^{-1}\|\geq\frac{1}{\eps}}.
\end{align*}
\item First note that,
  analogously to Proposition~\ref{prop:psincl},
\begin{align*}
\set{\lambda\in\C}{\|(A-\lambda)^{-1}\|\geq\frac{1}{\eps}} \subset \left(\overline{B_\delta(W(A^{-1}))}\right)^{-1} 
\end{align*}
for $0\in\varrho(A)$, $0<\eps<\frac{1}{\|A^{-1}\|}$ and
$\delta = \frac{\|A^{-1}\|^2\eps}{1-\|A^{-1}\|\eps}$.
The proof of this statement is completely analogous to the one of
Proposition \ref{prop:psincl}.
Proceeding as in the proof of Theorem \ref{theo:psincl-shift}, we obtain
\begin{align*}
\set{\lambda\in\C}{\|(A-\lambda)^{-1}\|\geq\frac{1}{\eps}} \subset \bigcap_{\gamma>0}\bigcap_{s\in S_\gamma}\left[\left(\overline{B_{\delta_s}(W((A-s)^{-1}))}\right)^{-1}+s\right].
\end{align*}
The other inclusion can be shown as in part (a) since we also have
\begin{align*}
S_\gamma\cap \bigcap_{{s}\in S_\gamma}
  \left[\bigl(\overline{B_{\delta_{{s}}}(W((A-{s})^{-1}))}\bigr)^{-1}+{s}\right] = \varnothing
\end{align*}
for every $\gamma>0$ as a consequence of Proposition~\ref{prop:rho}.
\item By (a) it suffices to show that $\lambda\in\varrho(A)\cap\overline{B_L(0)}$, $\|(A-\lambda)^{-1}\|=\frac{1}{\eps}$ implies $\lambda\notin \left(B_{\delta_s}(W((A-s)^{-1}))\right)^{-1}+s$ for some $\gamma>0$ and $s\in S_\gamma$. Let 
\begin{align*}
\eps_1 =\frac{1}{2}\min\set{\dist(\mu,\sigma(A)\setminus \{\mu\})}{\mu\in\sigma(A)\cap\overline{B_L(0)}}.
\end{align*}
Since $A$ has compact resolvent, the minimum exists and is positive. With
\begin{align*}
\eps_0 = \frac{1}{2}\min\set{\dist(\mu,\sigma(A)\setminus \{\mu\})}{\mu\in\sigma(A)\cap\overline{B_{L+3\eps_1}(0)}}
\end{align*}
we then have $0<\eps_0\leq\eps_1$. Let now $\eps<\eps_0$ and $\lambda\in \varrho(A)\cap\overline{B_L(0)}$ with $\|(A-\lambda)^{-1}\|=\frac{1}{\eps}$. Since $A$ is normal, we get $\dist(\lambda,\sigma(A))=\eps$ and hence there exists a $\mu\in\sigma(A)$ such that $|\lambda-\mu|=\eps$. In particular we have $\mu\in B_{L+\eps_1}(0)$. Choose now $\gamma\in(0,\eps_0-\eps)$, i.e. $\eps<\eps+\gamma<\eps_0$, and set
\begin{align*}
s = \mu+\frac{\eps+\gamma}{\eps}(\lambda-\mu).
\end{align*}
Then $s\in B_{\eps_0}(\mu)$ and
\begin{align*}
\dist(s,\sigma(A)) = |\mu-s|=\eps+\gamma.
\end{align*}
Indeed if $\mu'\in\sigma(A)\cap\overline{B_{L+3\eps_1}(0)}$ with $\mu\neq\mu'$, then $B_{\eps_0}(\mu)\cap B_{\eps_0}(\mu')=\varnothing$ and hence $|\mu'-s|>\eps_0$. If $\mu'\in \sigma(A)$ and $|\mu'|>L+3\eps_1$, then $\dist(\mu',B_{\eps_0}(\mu))>\eps_1$ since $B_{\eps_0}(\mu)\subset B_{L+\eps_1+\eps_0}(0)$ and thus $|\mu'-s|>\eps_1\geq\eps_0$. Due to $|\mu-s|<\eps_0$ we therefore obtain $\dist(s,\sigma(A))=|\mu-s|$ and because $A$ is normal we can conclude
\begin{align*}
\|(A-s)^{-1}\| = \frac{1}{\eps+\gamma},
\end{align*}
i.e. $s\in S_\gamma$. Since
\begin{align}\label{eq:gamma}
\frac{1}{\delta_s+\|(A-s)^{-1}\|} = \left(\frac{\|(A-s)^{-1}\|}{1-\|(A-s)^{-1}\|\eps}\right)^{-1} = \frac{1}{\|(A-s)^{-1}\|}-\eps = \gamma,
\end{align}
Proposition~\ref{prop:rho} implies
\begin{align*}
\overline{B_\gamma(s)}\cap\left[\left(B_{\delta_s}(W((A-s)^{-1}))\right)^{-1}+s\right]=\varnothing.
\end{align*}
By our choice of $s$ we have $\lambda\in \overline{B_\gamma(s)}$ and thus
\begin{equation*}
  \lambda\notin \left(B_{\delta_s}(W((A-s)^{-1}))\right)^{-1}+s.
  \qedhere
\end{equation*}
\end{enumerate}
\end{proof}
}
\cw{
\begin{remark}
  \begin{enumerate}
  \item
    The statements of part (a) and (b) of the previous theorem
    continue to hold under the weaker assumption
    \(\delta_s\geq\frac{\|(A-s)^{-1}\|^2\eps}{1-\|(A-s)^{-1}\|\eps},\)
    i.e., equality is not needed there.
  \item
    The cutoff with the large ball $\overline{B_L(0)}$ in part (c)
    is not needed in the matrix case
    (i.e.\ $\dim H<\infty$), or if the
    eigenvalues of $A$ satisfy a uniform gap condition.
    On the other hand, the equality in (c)
    will typically not hold for all $\eps>0$, i.e. the restriction $\eps<\eps_0$ is
    needed, even in the matrix case.
    This is illustrated with the next (counter-)example.
  \end{enumerate}
\end{remark}
}
\lv{
\begin{example}
Let the normal matrix $A$ be given by
\begin{align*}
A = \left(\begin{matrix}1&0\\0&-1\end{matrix}\right)
\end{align*}
and consider $\eps=1$. Then $\sigma_\eps(A)=B_1(1)\cup B_1(-1)$ and in particular $0\notin\sigma_\eps(A)$. We will show that $0\in\left(B_{\delta_s}(W((A-s)^{-1}))\right)^{-1}+s$ for all $s\in S_\gamma$, $\gamma>0$. Hence
\begin{align*}
\sigma_\eps(A)\subsetneqq \bigcap_{\gamma>0}\bigcap_{s\in S_\gamma}\left[\left(B_{\delta_s}(W((A-s)^{-1}))\right)^{-1}+s\right]
\end{align*}
in this case. First observe that for $s\in S_\gamma$, i.e. $\|(A-s)^{-1}\|=\frac{1}{\eps+\gamma}$, we have $\frac{1}{\delta_s+\|(A-s)^{-1}\|} = \gamma$, see \eqref{eq:gamma}. This implies
\begin{align*}
\delta_s = \frac{1}{\gamma}-\|(A-s)^{-1}\| = \frac{1}{\gamma}-\frac{1}{\eps+\gamma} = \frac{\eps}{\gamma(\eps+\gamma)} = \frac{1}{\gamma(1+\gamma)}
\end{align*}
since $\eps=1$. We also have
\begin{align*}
(A-s)^{-1} = \left(\begin{matrix}(1-s)^{-1}&0\\0&(-1-s)^{-1}\end{matrix}\right)
\end{align*}
and hence
\begin{align*}
W((A-s)^{-1}) = \set{r(1-s)^{-1}+(1-r)(-1-s)^{-1}}{r\in[0,1]}.
\end{align*}
Due to $A$ being normal, $S_\gamma$ is the boundary of the $(1+\gamma)$-neighborhood of $\{-1,1\}$. Thus by taking $s_0\in S_\gamma$ with $\Real s_0 = 0$ we have 
\begin{align*}
|s|^2\geq|s_0|^2 = (1+\gamma)^2-1^2 = \gamma^2+2\gamma
\end{align*}
and hence $|s|>\gamma$. From
\begin{align*}
|(\pm 1-s)^{-1}-(-s^{-1})| = \left|\frac{1}{\pm 1-s}+\frac{1}{s}\right| = \frac{1}{|s||\pm 1-s|}\leq \frac{1}{|s|(1+\gamma)}
\end{align*}
we get
\begin{align*}
\dist&(-s^{-1},W((A-s)^{-1}))\\
&\leq\min_{r\in[0,1]}r\left|(1-s)^{-1}-(-s^{-1})\right|+(1-r)\left|(-1-s)^{-1}-(-s)^{-1}\right|\\
&\leq \frac{1}{|s|(1+\gamma)}<\frac{1}{\gamma(1+\gamma)} = \delta_s.
\end{align*}
This shows that $-s^{-1}\in B_{\delta_s}(W((A-s)^{-1}))$ and therefore
\begin{align*}
0\in \left(B_{\delta_s}(W((A-s)^{-1}))\right)^{-1}+s.
\end{align*}
\end{example}
}

\lv{
\begin{remark}\label{rem:NumericalRange}
Note that under the assumption $\sigma(A)\subset\overline{W(A)}$ (which holds for example if $A$ has a compact resolvent) it is known (see e.g.~\cite{ETBook} for the matrix case) that the pseudospectrum can also be enclosed by an $\eps$-neighborhood of the numerical range, namely
\begin{align}\label{eq:NumericalRange}
\sigma_\eps(A)\subset B_\eps(W(A)).
\end{align}
Indeed for $\lambda\in\sigma_\eps(A)\setminus\sigma(A)$ we have $\|(A-\lambda)^{-1}\|>\frac{1}{\eps}$ and therefore
\begin{align*}
\|(A-\lambda)x\|<\eps \qquad \text{ for all } x\in \mdef(A), \|x\|=1.
\end{align*}
This implies 
\begin{align*}
|\langle Ax,x\rangle-\lambda| = |\langle(A-\lambda)x,x\rangle| \leq \|(A-\lambda)x\|<\eps
\end{align*}
for $x\in\mdef(A)$, $\|x\|=1$. See Section \ref{sec:numericalexamples} for a comparison of the enclosure \eqref{eq:NumericalRange} with our method \eqref{eq:psincl-shift}.
\end{remark}
}

\section{A strong approximation scheme}
\label{sec:strappr}

In this section
we consider finite-dimensional approximations $A_n$ to the full operator
$A$.
Our aim is to prove a version of Theorem~\ref{theo:psincl-shift}
which provides a pseudospectrum enclosure for the full operator $A$
in terms of numerical ranges of the approximating matrices $A_n$;
this will allow us to compute the enclosure by numerical methods.

We suppose that $0\in\varrho(A)$ and consider
a sequence of approximations $A_n$
of the operator $A$ of the following form:
\begin{enumerate}
\item
  $U_n\subset H$, $n\in\N$, are finite-dimensional subspaces of the Hilbert space $H$.
\item
  $P_n\in\mlin(H)$ are projections (not necessarily orthogonal)
  onto $U_n$, i.e. $\range (P_n)=U_n$, such that
  \begin{equation}\label{eq:strong-proj-conv}
    \lim_{n\to\infty}P_nx=x\qquad\text{for all}\qquad x\in H.
  \end{equation}
\item
  $A_n\in\mlin(U_n)$ are invertible such that
  \begin{equation}\label{eq:strong-approx}
    \lim_{n\to\infty}A_n^{-1}P_nx= A^{-1}x \qquad\text{for all}\qquad
    x\in H.
  \end{equation}
\end{enumerate}
In this case we say that the family $(P_n,A_n)_{n\in\N}$
\emph{approximates $A$ strongly}.
Note that \eqref{eq:strong-proj-conv} implies
that $\bigcup_{n\in\N}U_n$ is dense in $H$ and that
$\sup_{n\in\N}\|P_n\|<\infty$
by the uniform boundedness principle.

\begin{lemma}\label{lem:konsistenz}
  Let $U_n$, $P_n$ be such that \eqref{eq:strong-proj-conv} holds and
  let $A_n\in\mlin(U_n)$ be invertible. Then
  the following assertions are equivalent:
  \begin{enumerate}
  \item $\lim_{n\to\infty}A_n^{-1}P_n x= A^{-1}x$ for all $x\in H$,
    i.e., \eqref{eq:strong-approx} holds.
  \item $\sup_{n\in\N}\|A_n^{-1}\|_{\mlin(U_n)}<\infty$ and
    for all $x\in\mdef(A)$ there exist $x_n\in U_n$ such that
    \[\lim_{n\to\infty}x_n=x,\quad\lim_{n\to\infty}A_nx_n=Ax.\]
  \end{enumerate}
\end{lemma}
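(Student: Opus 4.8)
The plan is to prove the two implications separately, relying throughout on three elementary facts. First, by the uniform boundedness principle \eqref{eq:strong-proj-conv} gives $C:=\sup_{n}\|P_n\|<\infty$. Second, since $0\in\varrho(A)$, the resolvent $A^{-1}\in\mlin(H)$ is a bijection of $H$ onto $\mdef(A)$, so every $x\in\mdef(A)$ is of the form $x=A^{-1}y$ with $y=Ax\in H$. Third, because $P_n$ is a projection onto $U_n$ it acts as the identity there: $P_nu=u$ for $u\in U_n$, and in particular $A_nx_n\in U_n$ satisfies $P_n(A_nx_n)=A_nx_n$ for any $x_n\in U_n$. These observations are what let me pass freely between the norm $\|\cdot\|_{\mlin(U_n)}$ and the ambient norm $\|\cdot\|_{\mlin(H)}$.

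For the direction (a)$\Rightarrow$(b): the operators $A_n^{-1}P_n$ all belong to $\mlin(H)$ and converge pointwise by \eqref{eq:strong-approx}, so Banach--Steinhaus yields $\sup_{n}\|A_n^{-1}P_n\|_{\mlin(H)}<\infty$. Restricting to $u\in U_n$, where $A_n^{-1}u=A_n^{-1}P_nu$, gives $\|A_n^{-1}\|_{\mlin(U_n)}\le\|A_n^{-1}P_n\|_{\mlin(H)}$, which is the first half of (b). For the second half, given $x\in\mdef(A)$ I would set $x_n:=A_n^{-1}P_n(Ax)\in U_n$; then (a) applied to the vector $Ax$ gives $x_n\to A^{-1}(Ax)=x$, while $A_nx_n=A_nA_n^{-1}P_n(Ax)=P_n(Ax)\to Ax$ by \eqref{eq:strong-proj-conv}.

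For the direction (b)$\Rightarrow$(a): write $M:=\sup_{n}\|A_n^{-1}\|_{\mlin(U_n)}<\infty$, so that $\|A_n^{-1}P_n\|_{\mlin(H)}\le MC$. Fix an arbitrary $y\in H$ and put $x:=A^{-1}y\in\mdef(A)$; by (b) there exist $x_n\in U_n$ with $x_n\to x$ and $A_nx_n\to Ax=y$. The crux is the identity
\[
  A_n^{-1}P_ny-x_n=A_n^{-1}P_n(y-A_nx_n),
\]
which follows from $A_n^{-1}(A_nx_n)=x_n$ together with $A_nx_n=P_n(A_nx_n)\in U_n$. The right-hand side is bounded by $MC\,\|y-A_nx_n\|\to0$, and since $x_n\to x=A^{-1}y$ we conclude $A_n^{-1}P_ny\to A^{-1}y$.

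I expect the main obstacle to be getting the projection identity right and, above all, noticing that every $x\in\mdef(A)$ arises as $A^{-1}y$ for some $y\in H$: this is what converts the purely qualitative approximation property in (b) into convergence of $A_n^{-1}P_n$ on \emph{all} of $H$, so that no separate density or $\eps/3$-argument is needed. The only remaining care is bookkeeping the two operator norms and ensuring that $A_n^{-1}$ is applied exclusively to elements of $U_n$, which the projection property $P_nU_n=U_n$ guarantees at each step.
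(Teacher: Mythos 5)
Your proof is correct and follows essentially the same route as the paper's: Banach--Steinhaus applied to $A_n^{-1}P_n$ plus restriction to $U_n$ for (a)$\Rightarrow$(b) with the same choice $x_n=A_n^{-1}P_n(Ax)$, and for (b)$\Rightarrow$(a) the identity $A_n^{-1}P_ny-x_n=A_n^{-1}P_n(y-A_nx_n)$, which (since $P_nA_nx_n=A_nx_n$) is the same decomposition $A_n^{-1}P_ny=A_n^{-1}(P_nAx-A_nx_n)+x_n$ used in the paper. The only cosmetic difference is that you absorb $P_n$ into the error term and bound it by $MC\|y-A_nx_n\|$, whereas the paper lets $P_nAx\to Ax$ and $A_nx_n\to Ax$ separately; both arguments are valid.
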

\begin{proof}
  $(a)\Rightarrow(b)$.
  The uniform boundedness principle yields
  \[\sup_{n\in\N}\|A_n^{-1}P_n\|_{\mlin(H)}<\infty.\]
  Since $\|A_n^{-1}u\|=\|A_n^{-1}P_nu\|\leq\|A_n^{-1}P_n\|_{\mlin(H)}\|u\|$
  for all $u\in U_n$, this shows the first part.
  For the second, let $x\in\mdef(A)$ and set $y=Ax$ and
  $x_n=A_n^{-1}P_ny$. Then $x_n\to A^{-1}y=x$ and
  $A_nx_n=P_ny\to y=Ax$ as $n\to\infty$.

  $(b)\Rightarrow(a)$.
  Let $y\in H$. Set $x=A^{-1}y$ and choose $x_n\in U_n$ according to $(b)$.
  Then
  \[A_n^{-1}P_ny=
  A_n^{-1}P_nAx=
  A_n^{-1}(P_nAx-A_nx_n)+x_n.\]
  Since both $P_nAx\to Ax$ and $A_nx_n\to Ax$ as $n\to\infty$ and
  $\|A_n^{-1}\|$ is uniformly bounded, we obtain $(a)$.
\end{proof}

The following lemma shows that if $A$ is approximated by $A_n$ strongly,
then $A-\lambda$ is approximated by $A_n-\lambda$ strongly too,
provided $\|(A_n-\lambda)^{-1}\|$ is uniformly bounded in $n$.
\begin{lemma}\label{lem:shifted-strong-approx}
  Suppose that $(P_n,A_n)_{n\in\N}$ approximates $A$ strongly.
  If $\lambda\in\varrho(A)$ is such that $\lambda\in\varrho(A_n)$ for all
  $n\in\N$ and
  $\sup_{n\in\N}\|(A_n-\lambda)^{-1}\|<\infty$, then
  \[\lim_{n\to\infty}(A_n-\lambda)^{-1}P_nx=(A-\lambda)^{-1}x
  \qquad\text{for all}\qquad x\in H.\]
\end{lemma}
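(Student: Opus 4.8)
The plan is to run a standard stability--consistency--convergence argument, in which the hypothesis $M_\lambda:=\sup_{n\in\N}\|(A_n-\lambda)^{-1}\|<\infty$ plays the role of stability and Lemma~\ref{lem:konsistenz} supplies consistency. The guiding observation is that one should organize the proof around the \emph{target} vector $y:=(A-\lambda)^{-1}x$ and suitable approximants of it, rather than trying to pass the pointwise limit $A_n^{-1}P_n(\cdot)\to A^{-1}(\cdot)$ through the moving argument produced by the resolvent --- the latter would be circular.

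First I would fix $x\in H$ and set $y:=(A-\lambda)^{-1}x\in\mdef(A)$, which is the intended limit. Since $(P_n,A_n)_{n\in\N}$ approximates $A$ strongly, the implication $(a)\Rightarrow(b)$ of Lemma~\ref{lem:konsistenz}, applied to $y\in\mdef(A)$, furnishes a sequence $y_n\in U_n$ with $y_n\to y$ and $A_ny_n\to Ay$. Combining these two convergences gives
\[
(A_n-\lambda)y_n=A_ny_n-\lambda y_n\longrightarrow Ay-\lambda y=(A-\lambda)y=x,
\]
and together with $P_nx\to x$ from \eqref{eq:strong-proj-conv} this yields the consistency estimate $P_nx-(A_n-\lambda)y_n\to 0$ as $n\to\infty$.

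The key algebraic step is then the identity
\[
(A_n-\lambda)^{-1}P_nx-y_n=(A_n-\lambda)^{-1}\bigl[P_nx-(A_n-\lambda)y_n\bigr],
\]
which is valid since $y_n\in U_n=\range(P_n)$ and $(A_n-\lambda)^{-1}(A_n-\lambda)y_n=y_n$. Bounding the right-hand side by $M_\lambda\,\|P_nx-(A_n-\lambda)y_n\|$ and invoking the uniform bound shows it tends to $0$; hence $(A_n-\lambda)^{-1}P_nx-y_n\to 0$, and since $y_n\to y$ I conclude $(A_n-\lambda)^{-1}P_nx\to y=(A-\lambda)^{-1}x$, which is the claim. (Note that $(A_n-\lambda)^{-1}P_nx$ is well defined because $P_nx\in U_n$ and $\lambda\in\varrho(A_n)$.)

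I expect the only genuine subtlety to lie in the bookkeeping of where each operator is allowed to act: $(A_n-\lambda)^{-1}$ lives on $U_n$, so everything to which it is applied must first be projected into $U_n$, and the approximants $y_n$ must be taken \emph{in} $U_n$. Once the problem is phrased around $y$ and its approximants $y_n$, the stability bound $M_\lambda$ converts the vanishing consistency error $P_nx-(A_n-\lambda)y_n\to 0$ into the desired convergence, and no further estimates are needed; the apparent difficulty of the naive resolvent-identity route (which reproduces the statement for the vector $A^{-1}x$ and is therefore circular) is entirely sidestepped.
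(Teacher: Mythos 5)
Your argument is correct and is essentially the paper's proof: the paper simply notes that the consistency condition of Lemma~\ref{lem:konsistenz}(b) is invariant under the shift $A\mapsto A-\lambda$ and then invokes the implication $(b)\Rightarrow(a)$ of that lemma for $A_n-\lambda$, using the assumed uniform bound on $\|(A_n-\lambda)^{-1}\|$ as the stability ingredient. You have merely unrolled that $(b)\Rightarrow(a)$ step explicitly (your identity $(A_n-\lambda)^{-1}P_nx-y_n=(A_n-\lambda)^{-1}\bigl[P_nx-(A_n-\lambda)y_n\bigr]$ is exactly the one used in the proof of Lemma~\ref{lem:konsistenz}), so the two proofs coincide in substance.
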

\begin{proof}
  This follows immediately from Lemma~\ref{lem:konsistenz} since
  \[\lim_{n\to\infty}A_nx_n=Ax \quad\Longleftrightarrow\quad
  \lim_{n\to\infty}(A_n-\lambda)x_n=(A-\lambda)x\]
  whenever $\lim_{n\to\infty}x_n=x$.
\end{proof}

\begin{remark}\label{rem:conv-notions}
  \cw{In the literature there is a variety of notions describing the
  approximation of a linear operator.
  Two notions that are close to our definition of a strong approximation scheme
  are
  \emph{generalized strong resolvent convergence}, considered in
  \cite{boegli-phd,boegli18,weidmann00},
  and \emph{discrete-stable convergence}, see \cite{chatelin}.
  There are however subtle differences between these two notions and our setting:
  First, we do \emph{not} assume that $P_n(\mdef(A))\subset U_n$.
  Second, in Lemma~\ref{lem:konsistenz}(b) we do not have
  the convergence of $A_nP_nx$ to $Ax$,
  which would be the case for discrete-stable convergence.
  Up to these differences, the results of Lemmas~\ref{lem:konsistenz}
  and~\ref{lem:shifted-strong-approx} are well known in the literature,
  see \cite[Lemma~1.2.2, Theorem~1.2.9]{boegli-phd} and
  \cite[Lemma~3.16]{chatelin}.
  }
\end{remark}

We now prove 
a convergence result for the numerical range of the inverse operator
under strong approximations.
\begin{lemma}\label{lem:nrinv-approx}
  Suppose that $(P_n,A_n)_{n\in\N}$ approximates $A$ strongly. Then
  \begin{enumerate}
  \item for every $x\in H$, $\|x\|=1$ there exists a sequence $y_n\in U_n$,
    $\|y_n\|=1$ such that
    \[\lim_{n\to\infty}\iprod{A_n^{-1}y_n}{y_n}=\iprod{A^{-1}x}{x};\]
  \item for all $\delta>0$ there exists $n_0\in\N$ such that
    \[W(A^{-1})\subset B_\delta\left(W(A_n^{-1})\right),
    \qquad n\geq n_0.\]
  \end{enumerate}
\end{lemma}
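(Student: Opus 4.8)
The plan is to reduce the statement about the quadratic form $\langle A^{-1}x, x\rangle$ to the strong convergence $A_n^{-1}P_n x \to A^{-1}x$ that is guaranteed by \eqref{eq:strong-approx}. The obvious candidate vectors are $z_n = P_n x \in U_n$; by \eqref{eq:strong-proj-conv} we have $z_n \to x$, so in particular $\|z_n\|\to \|x\|=1$, and $z_n\neq 0$ for large $n$. Normalizing, set $y_n = z_n/\|z_n\|$. I would then split
\[
  \langle A_n^{-1}y_n, y_n\rangle
  = \frac{1}{\|z_n\|^2}\langle A_n^{-1}P_n x, P_n x\rangle
\]
and show the right-hand side converges to $\langle A^{-1}x, x\rangle$. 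The numerator is an inner product of two convergent sequences: $A_n^{-1}P_n x \to A^{-1}x$ by \eqref{eq:strong-approx} and $P_n x \to x$ by \eqref{eq:strong-proj-conv}, and since the inner product is jointly continuous (using that $\|A_n^{-1}P_n x\|$ stays bounded), $\langle A_n^{-1}P_n x, P_n x\rangle \to \langle A^{-1}x, x\rangle$. Combined with $\|z_n\|^2\to 1$, this gives the claim. This part is routine; the only mild care needed is discarding the finitely many $n$ with $z_n=0$ and observing that joint continuity of the inner product requires one of the two factors to be norm-bounded, which holds here.

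**Part (b).**
The goal is to show that, up to an arbitrarily small $\delta$-fattening, $W(A^{-1})$ is eventually swallowed by $W(A_n^{-1})$. The natural strategy is a compactness argument on the unit sphere combined with part (a). The difficulty is that part (a) only gives, for each fixed $x$, a sequence $y_n$ catching up to $\langle A^{-1}x,x\rangle$; to get a uniform $n_0$ working for \emph{all} points of $W(A^{-1})$ simultaneously, I need to upgrade this pointwise statement to a uniform one. I would exploit that $W(A^{-1})$ is a bounded convex subset of $\C$ (since $A^{-1}$ is bounded), hence has compact closure, so it can be covered by finitely many balls of radius $\delta/2$ centered at points $w_1,\dots,w_k \in W(A^{-1})$. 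Each $w_j = \langle A^{-1}x_j, x_j\rangle$ for some unit vector $x_j$, and part (a) supplies unit vectors $y_n^{(j)}\in U_n$ with $\langle A_n^{-1}y_n^{(j)}, y_n^{(j)}\rangle \to w_j$. Choosing $n_0$ large enough that all $k$ of these sequences are within $\delta/2$ of their limits, I get that each $w_j$ lies in $B_{\delta/2}(W(A_n^{-1}))$ for $n\geq n_0$.

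The final step is a triangle-inequality chain: an arbitrary $w\in W(A^{-1})$ lies within $\delta/2$ of some center $w_j$, which in turn lies within $\delta/2$ of a point of $W(A_n^{-1})$, so $w\in B_\delta(W(A_n^{-1}))$ for $n\geq n_0$. I expect the main obstacle to be precisely this passage from pointwise to uniform approximation; the finite-subcover trick on the compact set $\overline{W(A^{-1})}$ is what makes a single $n_0$ possible, and one must make sure the finitely many approximating sequences can all be controlled past a common index. An alternative more abstract route would be to try to prove $\overline{W(A_n^{-1})}$ converges to $\overline{W(A^{-1})}$ in the Hausdorff metric, but for the one-sided inclusion actually needed here the explicit finite-cover argument is cleaner and avoids proving the reverse inclusion, which need not hold.
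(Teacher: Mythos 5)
Your proposal is correct and follows essentially the same route as the paper: in (a) both use $y_n=P_nx/\|P_nx\|$ together with the strong convergence $A_n^{-1}P_nx\to A^{-1}x$ and the uniform bound on $\|A_n^{-1}\|$ (you factor out $1/\|P_nx\|^2$ where the paper uses a three-term triangle inequality, a cosmetic difference), and in (b) both use the identical finite $\delta/2$-cover of the precompact set $W(A^{-1})$ followed by taking the maximum of the finitely many indices. No gaps.
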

\begin{proof}
  \begin{enumerate}
  \item
    We set $y_n=P_nx/\|P_nx\|$. Note that $y_n$ is well defined for almost
    all $n$ since $\|P_nx\|\to\|x\|=1$.
    We get $y_n\to x$ as $n\to\infty$ and
    \begin{align*}
      &|\iprod{A^{-1}x}{x}-\iprod{A_n^{-1}y_n}{y_n}|\\
      &\leq|\iprod{A^{-1}x-A_n^{-1}P_nx}{x}|+|\iprod{A_n^{-1}P_nx}{x-y_n}|
      +|\iprod{A_n^{-1}(P_nx-y_n)}{y_n}|\\
      &\leq\|A^{-1}x-A_n^{-1}P_nx\|+\|A_n^{-1}\|\|P_nx\|\|x-y_n\|
      +\|A_n^{-1}\|\|P_nx-y_n\|,
    \end{align*}
    which yields the assertion.
  \item
    Since $W(A^{-1})$ is bounded, it is precompact and hence
    there exist $z_1,\dots,z_m\in W(A^{-1})$
    such that
    \[W(A^{-1})\subset\bigcup_{j=1}^m B_{\delta/2}(z_j).\]
    For every $j$ we have $z_j=\iprod{A^{-1}x_j}{x_j}$ with some
    $x_j\in H$, $\|x_j\|=1$, and by (a) there exists
    $n_j\in\N$ such that for all $n\geq n_j$ there is a $y_j\in U_n$,
    $\|y_j\|=1$ such that
    \[|\iprod{A^{-1}x_j}{x_j}-\iprod{A_n^{-1}y_j}{y_j}|<\frac{\delta}{2}.\]
    Hence
    \[W(A^{-1})\subset
    \bigcup_{j=1}^mB_\delta\left(\iprod{A_n^{-1}y_j}{y_j}\right)
    \subset B_\delta\left(W(A_n^{-1})\right)\]
    for all $n\geq n_0=\max\{n_1,\dots,n_m\}$.\qedhere
  \end{enumerate}
\end{proof}

The previous lemma allows us easily to prove an approximation
version of the basic enclosure result Proposition~\ref{prop:psincl}.
\begin{prop}\label{prop:psincl-str}
  Suppose that $(P_n,A_n)_{n\in\N}$ approximates $A$ strongly.
  For \lv{$0<\eps < \frac{1}{\|A^{-1}\|}$ and $\delta>\frac{\|A^{-1}\|^2\eps}{1-\|A^{-1}\|\eps}$}
  there exists $n_0\in\N$ such that
  \[\sigma_\eps(A)\subset\left(B_\delta(W(A_n^{-1}))\right)^{-1}
  \quad\text{for all}\quad n\geq n_0.\]
\end{prop}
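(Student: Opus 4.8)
The plan is to combine the operator-level enclosure from Proposition~\ref{prop:psincl} with the one-sided numerical-range approximation from Lemma~\ref{lem:nrinv-approx}(b). Since $0\in\varrho(A)$ is a standing assumption in this section, Proposition~\ref{prop:psincl} applies with the \emph{sharp} radius $\delta_0=\frac{\|A^{-1}\|^2\eps}{1-\|A^{-1}\|\eps}$ and gives $\sigma_\eps(A)\subset\bigl(B_{\delta_0}(W(A^{-1}))\bigr)^{-1}$. The only gap between this and the claim is that it involves the numerical range of the true inverse $A^{-1}$ rather than those of the approximants $A_n^{-1}$. The strict inequality $\delta>\delta_0$ in the hypothesis supplies exactly the slack needed to absorb the approximation error between $W(A^{-1})$ and $W(A_n^{-1})$.

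First I would set $\delta'=\delta-\delta_0>0$ and apply Lemma~\ref{lem:nrinv-approx}(b) to obtain an index $n_0$ such that $W(A^{-1})\subset B_{\delta'}(W(A_n^{-1}))$ for all $n\geq n_0$. Next I would establish the nesting of neighborhoods
\[
  B_{\delta_0}\bigl(W(A^{-1})\bigr)\subset B_{\delta_0+\delta'}\bigl(W(A_n^{-1})\bigr)=B_\delta\bigl(W(A_n^{-1})\bigr),\qquad n\geq n_0,
\]
which is just the triangle inequality for the distance function: a point within $\delta_0$ of $W(A^{-1})$ lies within $\delta_0$ of some point of $B_{\delta'}(W(A_n^{-1}))$, hence within $\delta_0+\delta'=\delta$ of $W(A_n^{-1})$. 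Finally I would use that the set inversion $S\mapsto S^{-1}$ is inclusion-preserving (if $S_1\subset S_2$ then $S_1^{-1}\subset S_2^{-1}$, directly from the definition of $S^{-1}$) to push the inclusion through $(\cdot)^{-1}$ and chain everything together:
\[
  \sigma_\eps(A)\subset\bigl(B_{\delta_0}(W(A^{-1}))\bigr)^{-1}\subset\bigl(B_\delta(W(A_n^{-1}))\bigr)^{-1},\qquad n\geq n_0.
\]

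I expect no serious obstacle: the argument is essentially bookkeeping once Proposition~\ref{prop:psincl} and Lemma~\ref{lem:nrinv-approx}(b) are in hand, since all the analytic content is already packaged in those two statements. The one point that needs care is the role of the strict inequality $\delta>\delta_0$: if one only assumed $\delta=\delta_0$, there would be no room to absorb the nonzero approximation error $\delta'$, and the chain would break. I would also keep in mind that Lemma~\ref{lem:nrinv-approx}(b) delivers only the one-sided inclusion $W(A^{-1})\subset B_{\delta'}(W(A_n^{-1}))$, which is exactly what is needed here; no reverse containment of the numerical ranges enters the argument.
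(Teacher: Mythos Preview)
Your proposal is correct and follows essentially the same approach as the paper: apply Proposition~\ref{prop:psincl} with the sharp radius $\delta_0$, use Lemma~\ref{lem:nrinv-approx}(b) with the slack $\delta-\delta_0>0$ to pass from $W(A^{-1})$ to $W(A_n^{-1})$, and then combine via the triangle inequality and monotonicity of $(\cdot)^{-1}$. The only difference is notational (the paper writes $\delta'$ for your $\delta_0$).
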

\begin{proof}
  By Proposition~\ref{prop:psincl} we have
  \[\sigma_\eps(A)\subset\left(B_{\delta'}(W(A^{-1}))\right)^{-1}\]
  where \lv{$\delta'=\frac{\|A^{-1}\|^2\eps}{1-\|A^{-1}\|\eps}$}. Since $\delta-\delta'>0$,
  Lemma~\ref{lem:nrinv-approx} yields a constant $n_0\in\N$ such that
  \[W(A^{-1})\subset B_{\delta-\delta'}\left(W(A_n^{-1})\right),
  \qquad n\geq n_0.\]
  Consequently
  $B_{\delta'}(W(A^{-1}))\subset B_\delta(W(A_n^{-1}))$
  for $n\geq n_0$ and the proof is complete.
\end{proof}

Combining the previous proposition with shifts of the operator,
we get our second main result.
It is analogous to Theorem~\ref{theo:psincl-shift},
but provides an enclosure of the pseudospectrum of the
infinite-dimensional operator in terms of numerical ranges
of the approximating matrices.
\begin{theo}\label{theo:psincl-str-shift}
  Suppose that $(P_n,A_n)_{n\in\N}$ approximates $A$ strongly.
  Let the shifts $s_1,\dots,s_m\in\varrho(A)$ be such that
  \[\sup_{n\in\N}\|(A_n-s_j)^{-1}\|<\infty \quad\text{for all}\quad
  j=1,\dots,m.\]
  Let \lv{$0<\eps < \frac{1}{\max_{j=1,\dots,m}\|(A-s_j)^{-1}\|}$ and
  $\delta_j>\frac{\|(A-s_j)^{-1}\|^2\eps}{1-\|(A-s_j)^{-1}\|\eps}$} for all $j$.
  Then there exists $n_0\in\N$ such that
  \[\sigma_\eps(A)\subset \bigcap_{j=1}^m
  \left[\left(B_{\delta_j}(W((A_n-s_j)^{-1}))\right)^{-1}+s_j\right]
  \quad\text{for all}\quad n\geq n_0.\]
\end{theo}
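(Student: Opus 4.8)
The plan is to reduce the claim to the single-shift approximation result Proposition~\ref{prop:psincl-str} by passing to each shifted operator $A-s_j$ separately and then intersecting over the finitely many shifts. The essential point is that the tools have already been prepared: Lemma~\ref{lem:shifted-strong-approx} converts the uniform resolvent bound $\sup_n\|(A_n-s_j)^{-1}\|<\infty$ into a strong approximation of the shifted operator, after which Proposition~\ref{prop:psincl-str} produces the desired enclosure for that shift.

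First I would fix $j\in\{1,\dots,m\}$ and verify that the family $(P_n,A_n-s_j)_{n\in\N}$ approximates $A-s_j$ strongly in the sense of Section~\ref{sec:strappr}. Since $s_j\in\varrho(A)$ we have $0\in\varrho(A-s_j)$; the subspaces $U_n$ and projections $P_n$ are unchanged and still satisfy \eqref{eq:strong-proj-conv}; and $A_n-s_j\in\mlin(U_n)$ is invertible because the hypothesis $\sup_n\|(A_n-s_j)^{-1}\|<\infty$ already presupposes $s_j\in\varrho(A_n)$ for every $n$. Lemma~\ref{lem:shifted-strong-approx}, applied with $\lambda=s_j$, then yields $(A_n-s_j)^{-1}P_nx\to(A-s_j)^{-1}x$ for all $x\in H$, which is precisely the condition \eqref{eq:strong-approx} for the operator $A-s_j$.

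Next I would apply Proposition~\ref{prop:psincl-str} to $A-s_j$ together with its approximation $(P_n,A_n-s_j)$. The hypotheses match: the bound $0<\eps<1/\|(A-s_j)^{-1}\|$ holds because $\eps<1/\max_{k}\|(A-s_k)^{-1}\|$, and $\delta_j>\frac{\|(A-s_j)^{-1}\|^2\eps}{1-\|(A-s_j)^{-1}\|\eps}$ is assumed. This provides an index $n_j\in\N$ with
\[\sigma_\eps(A-s_j)\subset\bigl(B_{\delta_j}(W((A_n-s_j)^{-1}))\bigr)^{-1}\qquad\text{for all }n\geq n_j.\]
Invoking the translation covariance $\sigma_\eps(A-s_j)=\sigma_\eps(A)-s_j$, which is immediate from the identity $(A-s_j-\lambda)^{-1}=(A-(s_j+\lambda))^{-1}$, this rearranges to $\sigma_\eps(A)\subset\bigl(B_{\delta_j}(W((A_n-s_j)^{-1}))\bigr)^{-1}+s_j$ for $n\geq n_j$. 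Finally I would set $n_0=\max\{n_1,\dots,n_m\}$; as the shift set is finite, for every $n\geq n_0$ all $m$ inclusions hold simultaneously, and taking their intersection gives the stated enclosure.

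The argument is essentially bookkeeping rather than a new idea, so I do not expect a genuine obstacle. The only step warranting care is the passage of the shift to the level of the approximating family: one must confirm that Lemma~\ref{lem:shifted-strong-approx} truly delivers strong approximation of $A-s_j$ with the \emph{same} projections $P_n$, and that the uniform bound carries the implicit invertibility $s_j\in\varrho(A_n)$. Handling infinitely many shifts would be the delicate point, since a single $n_0$ might then fail to exist; this is deliberately sidestepped here by restricting to a finite set of shifts, which is exactly why the maximum over $n_1,\dots,n_m$ yields a valid $n_0$.
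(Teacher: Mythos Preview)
Your proposal is correct and follows essentially the same route as the paper: apply Lemma~\ref{lem:shifted-strong-approx} to each shift $s_j$ so that Proposition~\ref{prop:psincl-str} furnishes an $n_j$, use $\sigma_\eps(A)=\sigma_\eps(A-s_j)+s_j$, and set $n_0=\max\{n_1,\dots,n_m\}$. Your write-up simply spells out more of the verification details than the paper's three-line proof.
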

\begin{proof}
  In view of Lemma~\ref{lem:shifted-strong-approx},
  Proposition~\ref{prop:psincl-str} can be applied to every $A-s_j$.
  Hence there exists $n_j\in\N$ such that
  \[\sigma_\eps(A-s_j)\subset\left(B_{\delta_j}(W((A_n-s_j)^{-1}))\right)^{-1},
  \qquad n\geq n_j.\]
  Since $\sigma_\eps(A)=\sigma_\eps(A-s_j)+s_j$, the claim follows
  with $n_0=\max\{n_1,\dots,n_m\}$.
\end{proof}

\section{A uniform approximation scheme}\label{sec:unifap}

In this section we pose additional assumptions on the
approximations $A_n$ of the infinite-dimensional operator $A$,
that will allow us to estimate the starting index $n_0$ for which the
pseudospectrum enclosures from
Proposition~\ref{prop:psincl-str} and
Theorem~\ref{theo:psincl-str-shift}
hold on  bounded sets.

Throughout this section we assume that $A$ has a compact resolvent,
$0\in\varrho(A)$ and that $\D(A)\subset W\subset H$ where the
Hilbert space $W$ is continuously and
densely embedded into $H$.
The closed graph theorem then implies
$A^{-1}\in \mlin(H,W)$.
Further, we suppose that there is a sequence of
approximations of the operator $A$ in the following sense:
\begin{enumerate}
\item
  $U_n\subset H$,
  $n\in\N$, are finite-dimensional subspaces of $H$.
\item There exist projections $P_n\in \mlin(H)$  onto $U_n$, $n\in\N$,
  not necessarily orthogonal,
  with $\sup_{n\in \N} \|P_n\|<\infty$ and
  $\|(I-P_n)|_W\|_{\mlin(W,H)}\rightarrow 0$ as $n\rightarrow \infty$.
\item There exist invertible operators $A_n\in \mlin(U_n)$, $n\in\N$,
  such that
  $\|A^{-1}-A^{-1}_nP_n\|\rightarrow 0$ as $n\rightarrow \infty$.
\end{enumerate}
We say that $(P_n,A_n)_{n\in\N}$ \emph{approximates $A$ uniformly}.
For $\|(I-P_n)|_W\|_{\mlin(W,H)}$ we will write abbreviatory
$\|I-P_n\|_{\mlin(W,H)}$.
\begin{remark}\label{rem:unifap}
  \begin{enumerate}
  \item
    Property (c) already implies that $A$ has compact resolvent:
    indeed
    $A^{-1}$  is the uniform limit of the finite rank operators
    $A_n^{-1}P_n$ and hence compact.
  \item
    If $(P_n,A_n)_{n\in\N}$ approximates $A$ uniformly, then also strongly.
    Note here that from (b) we first obtain $P_nx\to x$ for $x\in W$, which can
    then be extended to all $x\in H$ by the  density of $W$ in $H$ and the uniform
    boundedness of the $P_n$.
    One particular consequence of the strong approximation is
    \[\sup_{n\in \N} \|A^{-1}_n\|<\infty,\]
    see Lemma~\ref{lem:konsistenz}.
  \item
    \cw{%
    Property (c) amounts to the convergence of $A_n$ to $A$ in
    \emph{generalized norm resolvent sense}, see
    \cite{boegli-phd,boegli18,weidmann00} for this notion.
    Note however that our setting has the additional assumption
    that $P_n\to I$ \emph{uniformly} in $L(W,H)$
    where $\mdef(A)\subset W\subset H$.
    For generalized norm resolvent convergence this is not the case,
    but it will be a crucial element in the following proofs.
    }
  \end{enumerate}
\end{remark}

In order to obtain improved enclosures of the pseudospectrum 
under a uniform approximation scheme, that is, 
additional  estimates of the starting index $n_0$ for which the
pseudospectrum enclosures from
Proposition~\ref{prop:psincl-str} and
Theorem~\ref{theo:psincl-str-shift}
hold on  bounded sets,
we  refine
the results from Section~\ref{sec:psincl}
in terms of certain subsets of the full  numerical range of $A^{-1}$.
For $d>0$ we define
\begin{equation}\label{eq:nrd}
  W(A^{-1},d)= \set{\iprod{A^{-1}x}{x}}{\|x\|=1,\,x\in W,\, \|x\|_W\le d}.
\end{equation}
Clearly $W(A^{-1},d)\subset W(A^{-1})$. Moreover since $W$ is dense in $H$
we get
\begin{equation}\label{eq:nrd-limit}
  \overline{\bigcup_{d>0}W(A^{-1},d)}=\overline{W(A^{-1})}.
\end{equation}

\begin{prop}\label{prop:psincl2}
  Let $L>0$ and $d=L\|A^{-1}\|_{\mlin(H,W)}$. Then
  \begin{enumerate}
  \item $\sigma(A)\cap \overline{B_L(0)}\subset W(A^{-1},d)^{-1}$.
  \item
    If in addition \lv{$0<\eps < \frac{1}{\|A^{-1}\|}$},
    $L>\eps$ and \lv{$\delta=\frac{\|A^{-1}\|^2\eps}{1-\|A^{-1}\|\eps}$} then
    \[ \sigma_\varepsilon(A)\cap\overline{B_{L-\varepsilon}(0)} \subset
    \left(B_\delta(W(A^{-1},d))\right)^{-1}.\]
  \end{enumerate}
\end{prop}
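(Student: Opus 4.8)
The plan is to prove both inclusions constructively: for each relevant point I would exhibit a single unit vector $x\in W$ whose $W$-norm is controlled by $d$ and whose Rayleigh quotient $\iprod{A^{-1}x}{x}$ either realizes (part (a)) or closely approximates (part (b)) the reciprocal of that point. Part (a) is essentially the specialization of this idea to genuine eigenvalues, and part (b) runs parallel to the proof of Proposition~\ref{prop:psincl}, but with the numerical range replaced by its restricted version $W(A^{-1},d)$.

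For part (a) I would use that the compact resolvent forces every $\mu\in\sigma(A)\cap\overline{B_L(0)}$ to be an eigenvalue, with $\mu\neq0$ since $0\in\varrho(A)$. Picking a normalized eigenvector $x$ gives $x\in\D(A)\subset W$, $A^{-1}x=\mu^{-1}x$, and hence $\iprod{A^{-1}x}{x}=\mu^{-1}$. The only point needing care is the $W$-norm bound: from $x=\mu A^{-1}x$ and $A^{-1}\in\mlin(H,W)$ one gets $\|x\|_W\le|\mu|\,\|A^{-1}\|_{\mlin(H,W)}\le L\,\|A^{-1}\|_{\mlin(H,W)}=d$, so $\mu^{-1}\in W(A^{-1},d)$ and therefore $\mu\in W(A^{-1},d)^{-1}$.

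For part (b) the first preparatory step is to observe that $\sigma_\eps(A)$ avoids the small disc $\{|\lambda|\le\|A^{-1}\|^{-1}-\eps\}$; this is precisely the Neumann-series case inside the proof of Proposition~\ref{prop:psincl}, which yields $\|(A-\lambda)^{-1}\|\le\eps^{-1}$ there. Hence every $\lambda\in\sigma_\eps(A)$ satisfies the lower bound $|\lambda|>(1-\|A^{-1}\|\eps)/\|A^{-1}\|$. Next, for such $\lambda$ I would produce a unit vector $x\in\D(A)$ with $\|(A-\lambda)x\|<\eps$: an eigenvector if $\lambda\in\sigma(A)$, otherwise the normalization of $(A-\lambda)^{-1}y_0$ for a $y_0$ nearly maximizing the resolvent norm. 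Writing $y=(A-\lambda)x$, so $\|y\|<\eps$ and $x=A^{-1}(\lambda x+y)$, the estimate $\|x\|_W\le\|A^{-1}\|_{\mlin(H,W)}(|\lambda|+\|y\|)$ combined with $|\lambda|\le L-\eps$ gives $\|x\|_W<d$, so $\iprod{A^{-1}x}{x}\in W(A^{-1},d)$. Finally, pairing $x=\lambda A^{-1}x+A^{-1}y$ with $x$ produces the identity $\lambda^{-1}-\iprod{A^{-1}x}{x}=\lambda^{-1}\iprod{A^{-1}y}{x}$, whence $|\lambda^{-1}-\iprod{A^{-1}x}{x}|<\|A^{-1}\|\eps/|\lambda|$, and the lower bound on $|\lambda|$ collapses the right-hand side to exactly $\delta$. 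This places $\lambda^{-1}$ in $B_\delta(W(A^{-1},d))$, i.e.\ $\lambda\in(B_\delta(W(A^{-1},d)))^{-1}$.

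The hard part will be reconciling the two competing constraints on $\lambda$: controlling $\|x\|_W$ demands $|\lambda|+\|y\|<L$, which is exactly why the domain must shrink to $\overline{B_{L-\eps}(0)}$, whereas turning the Rayleigh-quotient defect into $\delta$ demands a lower bound on $|\lambda|$ to absorb the factor $1/|\lambda|$. Verifying that both constraints hold simultaneously, and that all inequalities stay strict so that the open neighborhood $B_\delta$ suffices, is the delicate bookkeeping; the remaining steps are routine estimates mirroring those in the proof of Proposition~\ref{prop:psincl}.
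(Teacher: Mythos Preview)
Your argument is correct, but it proceeds along a genuinely different route from the paper's.  The paper proves part~(b) by contraposition, mirroring Proposition~\ref{prop:psincl}: it fixes $\lambda\in\overline{B_{L-\eps}(0)}\setminus\bigl(B_\delta(W(A^{-1},d))\bigr)^{-1}$ and shows $\|(A-\lambda)x\|\ge\eps$ for \emph{every} unit vector $x\in\mdef(A)$, via a three-case split according to whether $|\lambda|$ is small (Neumann series), or $\|x\|_W\ge d$ (then $\|Ax\|\ge L$ forces $\|(A-\lambda)x\|\ge L-|\lambda|\ge\eps$), or $|\lambda|$ is large and $\|x\|_W\le d$ (then $\dist(\lambda^{-1},W(A^{-1},d))\ge\delta$ gives the bound as in Proposition~\ref{prop:psincl}).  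The compact-resolvent hypothesis is then invoked to pass from ``$\lambda\notin\sigmaapp(A)$'' to ``$\lambda\in\varrho(A)$'' without the connectedness argument used earlier.

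Your approach is instead \emph{direct}: for each $\lambda\in\sigma_\eps(A)\cap\overline{B_{L-\eps}(0)}$ you build a single witness $x$ with $\|(A-\lambda)x\|<\eps$, verify $\|x\|_W\le d$ from the identity $x=A^{-1}(\lambda x+y)$, and then compute the Rayleigh-quotient defect exactly via $\lambda^{-1}-\iprod{A^{-1}x}{x}=\lambda^{-1}\iprod{A^{-1}y}{x}$.  This avoids the case split over $\|x\|_W$ entirely, and the compact-resolvent assumption enters only to supply eigenvectors when $\lambda\in\sigma(A)$.  The paper's version has the advantage of structural parallelism with Proposition~\ref{prop:psincl} and makes transparent \emph{why} the restriction to $W(A^{-1},d)$ suffices (namely, vectors with $\|x\|_W>d$ are irrelevant because they already satisfy $\|(A-\lambda)x\|\ge\eps$); your version is shorter and makes the algebraic identity behind the $\delta$-bound more explicit.
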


\begin{proof}
  \begin{enumerate}
  \item Let $\lambda\in \sigma(A)$ with $|\lambda|\le L$. Then there exists $x\in \D(A)$ with $\|x\|=1$ and $Ax=\lambda x$. This implies 
    \[ \frac{1}{|\lambda|}\|x\|_W= \|A^{-1}x\|_W\le \|A^{-1}\|_{\mlin(H,W)} \|x\|=\|A^{-1}\|_{\mlin(H,W)} \]
    and thus we obtain
    \[ \|x\|_W \le \|A^{-1}\|_{\mlin(H,W)}|\lambda|\le L \|A^{-1}\|_{\mlin(H,W)}=d.\]
    Consequently $\lambda^{-1}= \iprod{A^{-1}x}{x}\in W(A^{-1},d)$.
  \item
    The proof is similar to the one of Proposition~\ref{prop:psincl}.
    We set $U=\left(B_\delta(W(A^{-1},d))\right)^{-1}$ and first show
    \begin{equation}\label{eq:regulartype2}
      \|(A-\lambda)x\|\geq\eps \quad\text{for all}\quad
      \lambda\in\overline{B_{L-\eps}(0)}\setminus U,\,x\in\mdef(A),\,\|x\|=1.
    \end{equation}
    Let $\lambda\in\overline{B_{L-\eps}(0)}\setminus U$, $x\in\mdef(A)$, $\|x\|=1$.
    We consider three cases.
    Suppose first that \lv{$|\lambda|>\frac{1}{\delta+\|A^{-1}\|}$} and $\|x\|_W\leq d$.
    From $\lambda\not\in U$ we obtain
    $\dist(\lambda^{-1},W(A^{-1},d))\geq\delta$,
    which implies
    \[\delta\leq|\lambda^{-1}-\iprod{A^{-1}x}{x}|
    =|\iprod{(\lambda^{-1}-A^{-1})x}{x}|
    \leq\|(\lambda^{-1}-A^{-1})x\|\]
    and thus
    \[\|(A-\lambda)x\|
    \geq\frac{|\lambda|}{\|A^{-1}\|}\|(\lambda^{-1}-A^{-1})x\|
    \geq\lv{\frac{\delta}{\|A^{-1}\|(\delta+\|A^{-1}\|)}}=\eps.\]
    In the second case assume $\|x\|_W\geq d$. Then
    \[ d\le \|x\|_W \le \|A^{-1}\|_{\mlin(H,W)} \|Ax\|,\]
    which in view of $\lambda\in\overline{B_{L-\eps}(0)}$ implies
    \[  \|(A-\lambda)x\| \ge \|Ax\| -|\lambda|
    \ge \frac{d}{\|A^{-1}\|_{\mlin(H,W)} }-|\lambda|=L-|\lambda|
    \geq\eps.\]
    Finally if \lv{$|\lambda|\leq\frac{1}{\delta +\|A^{-1}\|}$}, the same reasoning as in
    the proof of Proposition~\ref{prop:psincl} yields
    once again that $\|(A-\lambda)x\|\geq\eps$,
    and therefore \eqref{eq:regulartype2} is proved.
    Now, since $A$ has a compact resolvent \eqref{eq:regulartype2} implies that
    \[\lambda\in\overline{B_{L-\eps}(0)}\setminus U
    \quad\Rightarrow\quad \lambda\in\varrho(A),\,
    \|(A-\lambda)^{-1}\|\leq\frac{1}{\eps}.\]
    Consequently
    $\sigma_\eps(A)\cap\overline{B_{L-\eps}(0)}\subset U$.\qedhere
  \end{enumerate}
\end{proof}

From Proposition~\ref{prop:psincl2} we get again a shifted version:
\begin{theo}\label{theo:psincl2-shift}
  Let $S\subset\varrho(A)$ be such that
  \[M_0:=\sup_{s\in S}\|(A-s)^{-1}\|<\infty,\qquad
  M_1:=\sup_{s\in S}\|(A-s)^{-1}\|_{\mlin(H,W)}<\infty.\]
  For \lv{$0<\eps < \frac{1}{M_0}$}, $L>\eps$,
  $d=LM_1$ and \lv{$\delta_s=\frac{\|(A-s)^{-1}\|^2\eps}{1-\|(A-s)^{-1}\|\eps}$}
  we get the inclusion
  \[\sigma_\varepsilon(A)\cap\bigcap_{s\in S}\overline{B_{L-\varepsilon}(s)} \subset
  \bigcap_{s\in S}\left[\left(B_{\delta_s}(W((A-s)^{-1},d))\right)^{-1}+s\right].\]
\end{theo}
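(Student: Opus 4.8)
The plan is to mimic exactly the passage from Proposition~\ref{prop:psincl} to Theorem~\ref{theo:psincl-shift}, but now starting from the refined bounded-ball statement Proposition~\ref{prop:psincl2}(b) in place of Proposition~\ref{prop:psincl}. First I would fix an arbitrary $s\in S$ and check that the shifted operator $A-s$ satisfies the standing hypotheses of Section~\ref{sec:unifap}: since $s\in\varrho(A)$ we have $0\in\varrho(A-s)$; the resolvent identity shows that $A-s$ again has compact resolvent; and $\mdef(A-s)=\mdef(A)\subset W$ with $(A-s)^{-1}\in\mlin(H,W)$ by the closed graph theorem, exactly as for $A^{-1}$ at the start of the section. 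The scalar hypotheses transfer as well: from $\eps<1/M_0$ we obtain $\eps<1/\|(A-s)^{-1}\|$, the requirement $L>\eps$ is assumed outright, and $\delta_s$ is precisely the quantity that Proposition~\ref{prop:psincl2} attaches to $A-s$.

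Applying Proposition~\ref{prop:psincl2}(b) to $A-s$ then yields
\[
\sigma_\eps(A-s)\cap\overline{B_{L-\eps}(0)}\subset\bigl(B_{\delta_s}(W((A-s)^{-1},d_s))\bigr)^{-1},
\qquad d_s=L\,\|(A-s)^{-1}\|_{\mlin(H,W)}.
\]
Translating by $s$, using $\sigma_\eps(A-s)=\sigma_\eps(A)-s$ together with $\overline{B_{L-\eps}(0)}+s=\overline{B_{L-\eps}(s)}$, gives
\[
\sigma_\eps(A)\cap\overline{B_{L-\eps}(s)}\subset\bigl(B_{\delta_s}(W((A-s)^{-1},d_s))\bigr)^{-1}+s.
\]
Since this holds for every $s\in S$, bounding the left-hand intersection $\sigma_\eps(A)\cap\bigcap_{s\in S}\overline{B_{L-\eps}(s)}$ by each individual set $\sigma_\eps(A)\cap\overline{B_{L-\eps}(s)}$ and then intersecting the right-hand sides over $s\in S$ produces the claimed inclusion, up to the discrepancy between the per-shift truncation level $d_s$ and the uniform level $d=LM_1$.

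Reconciling $d_s$ with $d$ is the only genuinely new point compared with Theorem~\ref{theo:psincl-shift}, and I expect it to be the sole real obstacle. The clean way is to note that the truncated numerical range $W((A-s)^{-1},\cdot)$ is monotone increasing in its second argument, since relaxing the bound $\|x\|_W\le d$ only enlarges the admissible set of unit vectors. Because $d_s=L\|(A-s)^{-1}\|_{\mlin(H,W)}\le LM_1=d$ by the definition of $M_1$, this monotonicity gives $W((A-s)^{-1},d_s)\subset W((A-s)^{-1},d)$, hence $B_{\delta_s}(W((A-s)^{-1},d_s))\subset B_{\delta_s}(W((A-s)^{-1},d))$, and finally, since $z\mapsto z^{-1}$ is a bijection of $\C^*$ which preserves inclusions, $\bigl(B_{\delta_s}(W((A-s)^{-1},d_s))\bigr)^{-1}\subset\bigl(B_{\delta_s}(W((A-s)^{-1},d))\bigr)^{-1}$. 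Inserting this enlargement into the displayed inclusion above and then intersecting over $s\in S$ completes the argument.
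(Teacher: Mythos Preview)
Your proof is correct and follows exactly the paper's approach: apply Proposition~\ref{prop:psincl2}(b) to each shifted operator $A-s$, translate by $s$, and intersect over $s\in S$. You are in fact more careful than the paper's terse proof, which does not explicitly mention the reconciliation of the per-shift truncation level $d_s=L\|(A-s)^{-1}\|_{\mlin(H,W)}$ with the uniform level $d=LM_1$; your monotonicity argument for $W((A-s)^{-1},\cdot)$ fills that gap cleanly.
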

\begin{proof} Apply Proposition~\ref{prop:psincl2}(b) to $A-s$ for all $s\in S$
  and note that
  \[\lambda\in\sigma_\eps(A-s)\cap\overline{ B_{L-\eps}(0)}
  \quad\Leftrightarrow\quad
  \lambda+s\in\sigma_\eps(A)\cap\overline{B_{L-\eps}(s)}.\qedhere\]
\end{proof}

\begin{remark}
  By the continuity of the embedding $W\hookrightarrow H$,
  the condition $M_1<\infty$
  already implies $M_0<\infty$.
\end{remark}

For a uniform approximation scheme, the numerical range
of $A^{-1}$ can now be approximated with explicit control on the
starting index $n_0$:

\begin{lemma}\label{lem:nrinv-unifapprox}
  Suppose that $(P_n,A_n)_{n\in\N}$ approximates $A$ uniformly.
  Let
  \begin{equation}\label{eq:C0}
    C_0=\sup_{n\in \N}\left( \|A^{-1}_n\| \|P_n\|+ 6 \|A^{-1}_n\|\|P_n\|^2\right).
  \end{equation}
  \begin{enumerate}
  \item If $d>0$, $0<\delta\leq\frac{C_0}{2}$ and $n_0\in\N$ are such that for
    every $n\ge n_0$
    \[\|A^{-1}- A^{-1}_n P_n\|  + d C_0   \| I-P_n\|_{\mlin(W,H)}<\delta,\]
    then
    \begin{align*}
      W(A^{-1},d)\subset B_\delta(W(A^{-1}_n)), \qquad n\geq n_0.
    \end{align*}
  \item If $\delta>0$ and $n_0\in\N$ are such that for every $n\ge n_0$ we have
    $\|A^{-1}- A^{-1}_n P_n\|<\delta$,
    then
    \begin{align*}
      W(A^{-1}_n)\subset B_\delta(W(A^{-1})),\qquad n\geq n_0.
    \end{align*}
  \end{enumerate}
\end{lemma}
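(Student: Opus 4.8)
The plan is to prove the two inclusions separately by a direct point-chasing argument: for each generating unit vector on one side, exhibit a generating unit vector on the other side whose numerical-range value lies within $\delta$.

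Part (b) is the easy direction, and I would dispatch it first. Take any $z\in W(A_n^{-1})$, so $z=\iprod{A_n^{-1}y}{y}$ with $y\in U_n$ and $\|y\|=1$. Since $P_n$ is a projection onto $U_n$ and $y\in U_n$, we have $P_ny=y$, hence $A_n^{-1}y=A_n^{-1}P_ny$. Comparing with $\iprod{A^{-1}y}{y}\in W(A^{-1})$ gives
\[
|z-\iprod{A^{-1}y}{y}|=|\iprod{(A_n^{-1}P_n-A^{-1})y}{y}|\le\|A^{-1}-A_n^{-1}P_n\|<\delta,
\]
so $z\in B_\delta(W(A^{-1}))$; as $z$ was arbitrary this is the claim.

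Part (a) is the substantive direction. Fix $x\in W$ with $\|x\|=1$ and $\|x\|_W\le d$, so $\iprod{A^{-1}x}{x}\in W(A^{-1},d)$, set $y=P_nx$, and (once $y\neq0$ is verified) take $\hat y=y/\|y\|\in U_n$ with $\|\hat y\|=1$, so that $\iprod{A_n^{-1}\hat y}{\hat y}\in W(A_n^{-1})$ is the candidate approximant. First I would record the perturbation estimate $\|x-y\|=\|(I-P_n)x\|\le\|I-P_n\|_{\mlin(W,H)}\|x\|_W\le d\|I-P_n\|_{\mlin(W,H)}=:\alpha$. The hypothesis $dC_0\|I-P_n\|_{\mlin(W,H)}<\delta\le C_0/2$ forces $\alpha<1/2$ (here $C_0>0$, since $\|P_n\|\ge1$ and $A_n$ is invertible), whence $\|y\|\ge1-\alpha>1/2$, so $\hat y$ is well defined, $\|\hat y-y\|=|1-\|y\||\le\alpha$, and $\|y\|\le1+\alpha<3/2$. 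Then I would split the difference along the chain $\iprod{A^{-1}x}{x}\to\iprod{A_n^{-1}P_nx}{x}\to\iprod{A_n^{-1}y}{y}\to\iprod{A_n^{-1}\hat y}{\hat y}$: the first step contributes $\le\|A^{-1}-A_n^{-1}P_n\|$, the slot change from $x$ to $y$ contributes $\le\|A_n^{-1}\|\|P_n\|\,\alpha$, and the renormalization contributes $\le\|A_n^{-1}\|\,|\|y\|^2-1|\le\|A_n^{-1}\|\,\alpha(\|y\|+1)$ via the exact cancellation $\|y\|^2|1-\|y\|^{-2}|=|\|y\|^2-1|$. Bounding $\|y\|+1<5/2$ and using $\|P_n\|\ge1$, these sum to at most $(\|A_n^{-1}\|\|P_n\|+6\|A_n^{-1}\|\|P_n\|^2)\alpha\le C_0\alpha$, so the total difference is $<\|A^{-1}-A_n^{-1}P_n\|+dC_0\|I-P_n\|_{\mlin(W,H)}<\delta$.

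The main obstacle is precisely the normalization $\hat y=P_nx/\|P_nx\|$: one must guarantee $\|P_nx\|$ is bounded away from $0$ and control the renormalization error without losing the $\|A_n^{-1}\|\|P_n\|^2$ factor structure that $C_0$ is designed to absorb. This is exactly where the constraint $\delta\le C_0/2$ enters, ensuring $\alpha<1/2$, and where the generous constant $6$ in the definition of $C_0$ earns its keep, leaving comfortable slack over the sharp constant $5/2$. Summing the three contributions shows every point of $W(A^{-1},d)$ lies in $B_\delta(W(A_n^{-1}))$, completing the proof.
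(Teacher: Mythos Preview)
Your proof is correct and follows essentially the same route as the paper: part (b) is handled identically via $P_ny=y$, and for part (a) both arguments normalize $P_nx$ to a unit vector, use the hypothesis $\delta\le C_0/2$ to force $\|P_nx\|>1/2$, and split the difference along the chain $\iprod{A^{-1}x}{x}\to\iprod{A_n^{-1}P_nx}{P_nx}\to\iprod{A_n^{-1}\hat y}{\hat y}$. The only cosmetic difference is in the renormalization bookkeeping: the paper bounds $|1-\|P_nx\|^{-2}|\le 6\,|1-\|P_nx\||$ and then multiplies by $|\iprod{A_n^{-1}P_nx}{P_nx}|\le\|A_n^{-1}\|\|P_n\|^2$, while you use the cancellation $\|y\|^2|1-\|y\|^{-2}|=|\|y\|^2-1|\le\alpha(\|y\|+1)<\tfrac{5}{2}\alpha$ and then relax $\tfrac{5}{2}\|A_n^{-1}\|$ to $6\|A_n^{-1}\|\|P_n\|^2$ to fit the definition of $C_0$.
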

\begin{proof}
  Let $x\in W$ with $\|x\|=1$ and  $\|x\|_W\le d$. Then we obtain
  \begin{align*}
    &|\iprod{A^{-1}x}{x}- \iprod{A^{-1}_n P_nx}{P_nx}|\\
    &\le |\iprod{A^{-1}x- A^{-1}_n P_nx}{x}| + |\iprod{A^{-1}_n P_nx}{x-P_nx}|\\
    &\le \|A^{-1}- A^{-1}_n P_n\| \|x\|^2 + \|A^{-1}_n\| \|P_n\| \|x\| \| I-P_n\|_{\mlin(W,H)} \| x \|_W\\
    &\le  \|A^{-1}- A^{-1}_n P_n\|  + d \|A^{-1}_n\| \|P_n\| \| I-P_n\|_{\mlin(W,H)}.
  \end{align*}
  as well as
  \begin{align*}
    |1-\|P_nx\|| \le \| x-P_n x\|\le  \| I-P_n\|_{\mlin(W,H)}\|x\|_W
    \leq d\|I-P_n\|_{\mlin(W,H)}.
  \end{align*}
  Let $n\ge n_0$. Then
  \[|1-\|P_nx\||\leq d\|I-P_n\|_{\mlin(W,H)}
  <\frac{\delta}{C_0}\leq\frac12\]
  and hence
  $\|P_nx\|\ge \frac{1}{2}$. Let $x_n= \frac{P_nx}{\|P_nx\|}$.
  Then  $\|x_n\|=1$ and 
  \begin{align*}
    \left|1-\frac{1}{\|P_nx\|^2}\right| & = \left|\frac{\|P_nx\|^2-1}{\|P_nx\|^2}\right| \\
    & = \frac{(\|P_nx\|+1)|\|P_nx\|-1|}{\|P_nx\|^2}\\
    &= \left( \frac{1}{\|P_nx\|}+\frac{1}{\|P_nx\|^2}\right) |1-\|P_nx\||\\
    &\le 6 |1-\|P_nx\||\\
    &\le 6d \| I-P_n\|_{\mlin(W,H)}.
  \end{align*}
  This implies
  \begin{align*}
    |\iprod{A^{-1}_n P_nx}{P_nx}&- \iprod{A^{-1}_nx_n}{x_n}|\\
    &=\left|\iprod{A^{-1}_n P_nx}{P_nx}- \frac{\iprod{A^{-1}_nP_n x}{P_n x}}{\|P_nx\|^2}\right|\\
    &= \left|1 - \frac{1}{\|P_nx\|^2}\right||\iprod{A^{-1}_n P_nx}{P_nx}|\\
    &\le 6 d \| I-P_n\|_{\mlin(W,H)}  \|A^{-1}_n\| \|P_n\|^2,
  \end{align*}
  and thus for $n\ge n_0$ we arrive at
  \begin{align*}
    &|\iprod{A^{-1}x}{x}-\iprod{A^{-1}_n x_n}{x_n}|\\
    &\leq\|A^{-1}- A^{-1}_n P_n\|
    + d \| I-P_n\|_{\mlin(W,H)}( \|A^{-1}_n\| \|P_n\|+6\|A_n^{-1}\|\|P_n\|^2)\\
    &\le \|A^{-1}- A^{-1}_n P_n\|  + d C_0   \| I-P_n\|_{\mlin(W,H)}\\
    &<\delta.
  \end{align*}
  This yields $\iprod{A^{-1}x}{x}\in B_\delta(W(A^{-1} _n))$ if $n\ge n_0$ and  proves (a). 

  In order to show part (b), let $x\in U_n$ with $\|x\|=1$. As $x=P_nx$ we have
  \begin{align*}
    |\iprod{A^{-1}_nx}{x}- \iprod{A^{-1}x}{x}|
    &\le \|A^{-1}_n x- A^{-1}x\| \|x\| \\
    &= \|A^{-1}_nP_n x- A^{-1}x\|  \le \|A^{-1}- A^{-1}_n P_n\|.
  \end{align*}
  Thus $\iprod{A^{-1}_nx}{x}\in B_\delta(W(A^{-1}))$ for $n\ge n_0$. 
\end{proof}

\begin{coroll}\label{cor1}
  If  $(P_n,A_n)_{n\in\N}$ approximates $A$ uniformly, then
  \begin{align*}
    \overline{W(A^{-1})}=\{ \lambda\in\C\mid \exists (\lambda_n)_{n\in\N}
    \text{ with } \lambda_n\in W(A_n^{-1}) \text{ and }\lim_{n\rightarrow
      \infty} \lambda_n= \lambda\}
  \end{align*}
  or, equivalently,
  \[\displaystyle\overline{W(A^{-1})}=\bigcap_{m\in\N}
  \overline{\bigcup_{n\ge m} W(A_n^{-1})}.\]
\end{coroll}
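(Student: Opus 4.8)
The plan is to prove the two displayed equalities in Corollary~\ref{cor1} by showing both inclusions for the first characterization, and then deriving the equivalent reformulation as a routine set-theoretic consequence. Throughout I would use Lemma~\ref{lem:nrinv-unifapprox}, which is precisely the tool that controls the one-sided Hausdorff distances between $W(A^{-1})$ and $W(A_n^{-1})$ under uniform approximation.

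\medskip

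\emph{First inclusion ($\overline{W(A^{-1})}\subset$ the limit set).}
Let $\lambda\in\overline{W(A^{-1})}$. By \eqref{eq:nrd-limit} the sets $W(A^{-1},d)$ exhaust $\overline{W(A^{-1})}$ as $d\to\infty$, so I can find points of $\bigcup_{d>0}W(A^{-1},d)$ arbitrarily close to $\lambda$; fix such an approximating point and a corresponding level $d$. Given any target accuracy $\delta>0$, part (a) of Lemma~\ref{lem:nrinv-unifapprox} provides an $n_0$ (using $\|A^{-1}-A_n^{-1}P_n\|\to0$ from assumption (c) and $\|I-P_n\|_{\mlin(W,H)}\to0$ from assumption (b)) such that $W(A^{-1},d)\subset B_\delta(W(A_n^{-1}))$ for all $n\ge n_0$. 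Hence for large $n$ there exist $\lambda_n\in W(A_n^{-1})$ within $\delta$ of the chosen approximating point, and therefore within $2\delta$ of $\lambda$. A diagonal argument letting $\delta\to0$ produces a sequence $\lambda_n\in W(A_n^{-1})$ converging to $\lambda$. The mild technical point here is that $d$ depends on how close the approximating point is to $\lambda$, so one must first fix the approximation of $\lambda$, which fixes $d$, and only then invoke part (a); I would organize the diagonalization to respect that order of quantifiers.

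\medskip

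\emph{Second inclusion (the limit set $\subset\overline{W(A^{-1})}$).}
Suppose $\lambda_n\in W(A_n^{-1})$ with $\lambda_n\to\lambda$. By part (b) of Lemma~\ref{lem:nrinv-unifapprox}, for every $\delta>0$ there is an $n_0$ with $W(A_n^{-1})\subset B_\delta(W(A^{-1}))$ for $n\ge n_0$, so each $\lambda_n$ (for large $n$) lies within $\delta$ of $W(A^{-1})$. Since $\lambda_n\to\lambda$ and $\delta$ is arbitrary, $\lambda$ lies in the closure $\overline{W(A^{-1})}$. This direction is the easy one.

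\medskip

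\emph{Equivalent reformulation.} Finally I would observe that the set $\{\lambda\mid\exists\,\lambda_n\in W(A_n^{-1}),\ \lambda_n\to\lambda\}$ is exactly the set of limit points of sequences picking one element from each $W(A_n^{-1})$, which by a standard argument equals $\bigcap_{m\in\N}\overline{\bigcup_{n\ge m}W(A_n^{-1})}$ (a $\lambda$ is a subsequential limit of such a selection iff every tail union $\bigcup_{n\ge m}W(A_n^{-1})$ comes arbitrarily close to $\lambda$). I expect the \textbf{main obstacle} to be purely bookkeeping in the first inclusion: correctly nesting the three limits ($d$ governing which level set of the numerical range we use, $\delta$ the accuracy, and $n$) so that Lemma~\ref{lem:nrinv-unifapprox}(a)'s hypothesis $0<\delta\le C_0/2$ and the combined smallness condition $\|A^{-1}-A_n^{-1}P_n\|+dC_0\|I-P_n\|_{\mlin(W,H)}<\delta$ can simultaneously be met for the fixed $d$; everything else reduces to the two one-sided estimates already established in the lemma.
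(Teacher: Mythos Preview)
Your two main inclusions are argued exactly as in the paper, via parts (a) and (b) of Lemma~\ref{lem:nrinv-unifapprox}; your diagonal construction for the inclusion $\overline{W(A^{-1})}\subset L$ is in fact more explicit than the paper's, which simply records $\bigcup_{d>0}W(A^{-1},d)\subset L$ and then appeals to~\eqref{eq:nrd-limit}.

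There is, however, a genuine error in your ``equivalent reformulation'' paragraph. You assert that the set
\[
L=\{\lambda\mid\exists\,\lambda_n\in W(A_n^{-1}),\ \lambda_n\to\lambda\}
\]
coincides with $L'=\bigcap_{m}\overline{\bigcup_{n\ge m}W(A_n^{-1})}$ by a ``standard argument'', and your parenthetical justification describes $L'$ as the set of \emph{subsequential} limits. But $L$ is the set of \emph{full-sequence} limits, and these differ in general: if, say, the sets $W(A_n^{-1})$ alternate between two disjoint compact sets, then $L=\varnothing$ while $L'$ is their union. The equality $L=L'$ is therefore not a set-theoretic triviality; in the corollary it holds only because both sides turn out to equal $\overline{W(A^{-1})}$. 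The clean fix is to establish $L'=\overline{W(A^{-1})}$ separately: the inclusion $\overline{W(A^{-1})}=L\subset L'$ is immediate from what you already proved, and $L'\subset\overline{W(A^{-1})}$ follows again from Lemma~\ref{lem:nrinv-unifapprox}(b), since for every $\delta>0$ the tail union $\bigcup_{n\ge n_0}W(A_n^{-1})$ lies in $B_\delta(W(A^{-1}))$, whence $L'\subset\overline{B_\delta(W(A^{-1}))}$ for all $\delta>0$.
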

\begin{proof}
  We first show the inclusion "$\supset$". Let $(\lambda_n)_{n\in\N}$ be
  a convergent sequence in $\C$ with $\lambda_n\in W(A_n^{-1})$ and
  define $\lambda=\lim_{n\rightarrow \infty} \lambda_n$. Let
  $\delta>0$ be arbitrary. Lemma~\ref{lem:nrinv-unifapprox}(b) implies that there
  exists $n_0\in\N$ such that $\lambda_n\in B_\delta(W(A^{-1}))$
  for every $n\ge n_0$. This implies $\lambda \in
  B_\delta(W(A^{-1}))$ for every $\delta>0$, and thus $\lambda
  \in \overline{W(A^{-1})}$.

  Conversely, let $\lambda\in W(A^{-1},d)$ for some $d>0$.
  Using Lemma~\ref{lem:nrinv-unifapprox}(a), we can construct a sequence
  $(\lambda_n)_{n\in\N}$ in $\C$ with $\lambda_n\in W(A_n^{-1})$ and
  $\lambda=\lim_{n\rightarrow \infty} \lambda_n$. The statement now
  follows from \eqref{eq:nrd-limit}.
\end{proof}

The last result shows that $\overline{W(A^{-1})}$ can be represented as the
pointwise limit of the finite-dimensional numerical ranges $W(A_n^{-1})$.
Lemma~\ref{lem:nrinv-unifapprox} even yields a  uniform approximation,
but this is asymmetric, since one inclusion only holds for the restricted
numerical range $W(A^{-1},d)$.
A more symmetric result is discussed in the next remark:

\begin{remark}\label{rem1}
  If $U_n\subset W$ for some $n\in\N$ then, due to the fact that
  the space $U_n$ is finite-dimensional,
  \begin{align*}
    d_n:=\sup_{x\in U_n}\frac{\|x\|_W}{\|x\|}<\infty.
  \end{align*}
  Using the same reasoning as in the proof of Lemma~\ref{lem:nrinv-unifapprox}(b),
  we then obtain
  \begin{align*}
    W(A^{-1}_n)\subset B_\delta (W(A^{-1}, d_n))
  \end{align*}
  if $\|A^{-1}- A^{-1}_n P_n\|<\delta$.
  \end{remark}
  
  Note however that for  finite element discretization schemes
  the condition $U_n\subset W$ will usually \emph{not} be fulfilled.
  In our examples for instance $U_n$ are
  piecewise linear finite elements
  while $W\subset H^2(\Omega)$ is a second order Sobolev space,
  and thus $U_n\not\subset W$.

Under a uniform approximation scheme the pseudospectrum can be
approximated as follows.

\begin{prop}\label{prop:psincl-unif}
  Suppose that $(P_n,A_n)_{n\in\N}$ approximates $A$ uniformly.
  Let
  \[r>0,\quad \lv{0<\varepsilon < \frac{1}{\|A^{-1}\|} \quad\text{and}\quad
  \frac{\|A^{-1}\|^2\varepsilon}{1-\|A^{-1}\|\eps}<\delta\leq \frac{\|A^{-1}\|^2\varepsilon}{1-\|A^{-1}\|\eps} + \frac{7}{2}\|A^{-1}\|} .\]
  If we choose
  $n_0\in\N$ such that for every $n\ge n_0$
  \[ \|A^{-1}- A^{-1}_n P_n\|
  + (r+\varepsilon) \|A^{-1}\|_{\mlin(H,W)} C_0   \| I-P_n\|_{\mlin(W,H)}
  \lv{<\delta- \frac{\|A^{-1}\|^2\varepsilon}{1-\|A^{-1}\|\eps}}, \]
  where $C_0$ is defined in \eqref{eq:C0},
  then we obtain
  \[  \sigma_\varepsilon(A)\cap\overline{B_{r}(0)}
  \subset \left(B_\delta(W(A^{-1}_n))\right)^{-1}
  \quad\text{for all}\quad n\ge n_0. \]
\end{prop}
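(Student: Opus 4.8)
The plan is to combine the bounded-ball enclosure of Proposition~\ref{prop:psincl2}(b), which controls the pseudospectrum in terms of the \emph{restricted} numerical range $W(A^{-1},d)$, with the uniform approximation of that restricted numerical range by $W(A_n^{-1})$ furnished by Lemma~\ref{lem:nrinv-unifapprox}(a). Write $\delta'=\frac{\|A^{-1}\|^2\varepsilon}{1-\|A^{-1}\|\varepsilon}$ for the ``exact'' radius appearing in Proposition~\ref{prop:psincl}, so that the hypothesis gives a positive slack $\delta-\delta'>0$. The idea is to spend this slack exactly on the approximation error between $W(A^{-1},d)$ and $W(A_n^{-1})$, thereby turning the $\delta'$-enclosure for the full operator into a $\delta$-enclosure in terms of the matrices $A_n$.

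First I would fix $L=r+\varepsilon$ and $d=L\|A^{-1}\|_{\mlin(H,W)}=(r+\varepsilon)\|A^{-1}\|_{\mlin(H,W)}$. Since $r>0$ we have $L>\varepsilon$, so Proposition~\ref{prop:psincl2}(b) applies with radius $\delta'$ and yields
\[
\sigma_\varepsilon(A)\cap\overline{B_r(0)}
=\sigma_\varepsilon(A)\cap\overline{B_{L-\varepsilon}(0)}
\subset\bigl(B_{\delta'}(W(A^{-1},d))\bigr)^{-1}.
\]
Next I would invoke Lemma~\ref{lem:nrinv-unifapprox}(a) with neighborhood radius $\delta-\delta'$ and the above $d$. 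After substituting $d=(r+\varepsilon)\|A^{-1}\|_{\mlin(H,W)}$, its hypothesis $\|A^{-1}-A_n^{-1}P_n\|+dC_0\|I-P_n\|_{\mlin(W,H)}<\delta-\delta'$ is precisely the condition imposed on $n_0$ in the statement; hence it holds for all $n\ge n_0$ and gives $W(A^{-1},d)\subset B_{\delta-\delta'}(W(A_n^{-1}))$. Taking a $\delta'$-neighborhood of both sides and using the triangle inequality for the distance function produces $B_{\delta'}(W(A^{-1},d))\subset B_\delta(W(A_n^{-1}))$, and since $z\mapsto z^{-1}$ is a bijection of $\C^*$ preserving inclusion, the same inclusion holds after taking inverses. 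Chaining this with the display above completes the argument.

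The one point that needs care — and where the seemingly arbitrary upper bound $\delta\le\delta'+\frac{7}{2}\|A^{-1}\|$ enters — is verifying the standing hypothesis $0<\delta-\delta'\le\frac{C_0}{2}$ of Lemma~\ref{lem:nrinv-unifapprox}(a). The lower bound is immediate from $\delta>\delta'$. For the upper bound I would establish $C_0\ge 7\|A^{-1}\|$: since each $P_n$ is a nonzero projection we have $\|P_n\|\ge1$, hence $1+6\|P_n\|\ge7$, and together with $\|A_n^{-1}\|\|P_n\|\ge\|A_n^{-1}P_n\|$ this gives $C_0\ge 7\sup_n\|A_n^{-1}P_n\|$. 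Because $A_n^{-1}P_n\to A^{-1}$ in operator norm, $\sup_n\|A_n^{-1}P_n\|\ge\lim_n\|A_n^{-1}P_n\|=\|A^{-1}\|$, so $C_0\ge7\|A^{-1}\|$ and thus $\delta-\delta'\le\frac{7}{2}\|A^{-1}\|\le\frac{C_0}{2}$. This lower bound on $C_0$ is the only genuinely nontrivial verification; everything else is the bookkeeping of matching the radii via $L-\varepsilon=r$ and splitting $\delta=\delta'+(\delta-\delta')$ into the exact enclosure radius plus the approximation slack.
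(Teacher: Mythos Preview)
Your proof is correct and follows essentially the same route as the paper: apply Proposition~\ref{prop:psincl2}(b) with $L=r+\varepsilon$ and $d=L\|A^{-1}\|_{\mlin(H,W)}$, then use Lemma~\ref{lem:nrinv-unifapprox}(a) with radius $\delta-\delta'$ to pass from $W(A^{-1},d)$ to $W(A_n^{-1})$. Your verification of $\delta-\delta'\le\frac{C_0}{2}$ via $C_0\ge7\sup_n\|A_n^{-1}P_n\|\ge7\|A^{-1}\|$ is the same argument as the paper's, only phrased with a supremum rather than a $\limsup$.
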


\begin{proof}
  Let \lv{$\delta'= \frac{\|A^{-1}\|^2\varepsilon}{1-\|A^{-1}\|\eps}$}, $L=r+\varepsilon$ and
  $d=L \|A^{-1}\|_{\mlin(H,W)}$.
  Proposition~\ref{prop:psincl2} implies
  \[ \sigma_\varepsilon(A)\cap\overline{B_{r}(0)}
  \subset  (B_{\delta'}(W(A^{-1},d)))^{-1}.\]
  Next note that
  \lv{
  \begin{align*}
  \delta-\delta'&\leq\frac{7}{2}\|A^{-1}\|=\lim_{n\to\infty}\frac{7}{2}\|A_n^{-1}P_n\|
  \leq\frac{1}{2}\limsup_{n\to\infty}\left(\|A_n^{-1}\|\|P_n\|+6\|A_n^{-1}\|\|P_n\|^2\right)\\
  &\leq\frac{C_0}{2},
  \end{align*}
  because $P_n$ is a projection.}
  We can therefore apply Lemma~\ref{lem:nrinv-unifapprox} 
  with $\delta$ replaced by $\delta-\delta'$ and $n_0$ chosen as stated above
  and obtain
  \[W(A^{-1},d)\subset B_{\delta-\delta'}(W(A_n^{-1})) \quad\text{for}\quad
  n\geq n_0\]
  and hence the assertion.
\end{proof}

\section{Finite element discretization of elliptic partial differential operators}
\label{sec:fe-discr}

As an example for a uniform approximation scheme defined in
Section~\ref{sec:unifap} we now consider finite element discretizations.
We use the  standard textbook approach via form methods,
which can be found  e.g.\ in \cite{Arendt,Sho}.

Let $V$ and $H$ be Hilbert spaces with $V\subset H$ densely and
continuously embedded.  In particular there is a constant $c>0$ such
that
\begin{equation}\label{eq:emb-const}
  \|x\|\leq c\|x\|_V,\qquad x\in V.
\end{equation}
Moreover, we consider a bounded and coercive sesquilinear form
$a:V\times V\rightarrow \C$, that is, there exists constants $M,
\gamma >0$ such that
\begin{equation}\label{eq:coercive}
  |a(x,y)|\le M \|x\|_V \|y\|_V \quad \mbox{and}\quad
  \Real a(x,x)\ge \gamma \|x\|^2_V, \quad x,y\in V.
\end{equation}
Let $A:\D(A)\subset H\rightarrow H$ be the operator associated with $a$,
which is given by
\begin{align*}
  \D(A) &= \{ x\in V\mid \exists c_x>0: |a(x,y)|\le c_x\|y\|\mbox{ for
  }y\in V\},\\ a(x,y)&= \iprod{Ax}{y}, \quad x\in D(A),y\in V.
\end{align*}
Then $A$ is a densely defined, closed operator
with $0\in \varrho(A)$ and $\|A^{-1}\|\le \frac{c^2}{\gamma}$, where
$c>0$ is the constant from \eqref{eq:emb-const}.

Let $(U_n)_{n\in\N}$
be a sequence of finite-dimensional  subspaces of $V$ which are nested,
that is $U_n\subset U_{n+1}$.
We denote by $a_n=a|_{U_n}$ the restriction of $a$ from
$V$ to $U_n$. The form $a_n$ is again bounded and coercive with the
same constants $M$ and $\gamma$. Let $A_n\in\mlin(U_n)$ be the operator
associated with $a_n$, i.e.
\[ a(x,y) =\iprod{A_n x}{y}, \qquad x, y \in U_n.\]
Then again $0\in \varrho(A_n)$ and $\|A^{-1}_n\|\le
\frac{c^2}{\gamma}$.
Let $P_n\in\mlin(H)$ be the orthogonal
projection onto $U_n$. Thus $\|P_n\|=1$ and $A_n=P_n A_{n+1}|_{U_n}$,
that is, $A_n$ is a compression of $A_{n+1}$.

To obtain a uniform approximation scheme, we now consider an
additional Hilbert space $W$ which is densely and
continuously embedded into $H$ such that $\D(A)\subset W\subset V$.
We assume that there exists a sequence of operators $Q_n\in\mlin(W,V)$ with
$\range(Q_n)\subset U_n$ and
\begin{equation}\label{eq:interpol-op}
  \lim_{n\to\infty}\|I-Q_n\|_{\mlin(W,V)}=0.
\end{equation}
  
\begin{lemma}\label{lem:fe-unifapprox}
  For all $n\in\N$ the estimates
  \begin{align*}
    \| I-P_n\|_{\mlin(W,H)}&\le c \|I-Q_n\|_{\mlin(W,V)}, \\
    \|A^{-1}- A^{-1}_n P_n\| &\le \frac{cM}{\gamma}
    \|A^{-1}\|_{\mlin(H,W)} \|I-Q_n\|_{\mlin(W,V)}
  \end{align*}
  hold.
  In particular, the family $(P_n,A_n)_{n\in\N}$ approximates $A$ uniformly.
\end{lemma}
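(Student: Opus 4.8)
The goal is to establish the two operator-norm estimates in Lemma~\ref{lem:fe-unifapprox}, and then to verify that conditions (b) and (c) of the uniform approximation scheme from Section~\ref{sec:unifap} are satisfied (condition (a) being immediate since each $U_n$ is finite-dimensional). The plan is to exploit the Galerkin/form structure: both $A$ and $A_n$ are defined through the \emph{same} sesquilinear form $a$, just on different spaces, and the orthogonal projection $P_n$ together with the interpolation-type operators $Q_n$ will let me transfer the convergence $\|I-Q_n\|_{\mlin(W,V)}\to 0$ into the two desired bounds.

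First I would prove the easy estimate $\|I-P_n\|_{\mlin(W,H)}\le c\|I-Q_n\|_{\mlin(W,V)}$. The key point is that $P_n$ is the \emph{orthogonal} projection onto $U_n$ in $H$, hence for $x\in W$ it minimizes the $H$-distance to $U_n$; since $\range(Q_n)\subset U_n$, we have $Q_nx\in U_n$ and therefore
\begin{equation*}
  \|(I-P_n)x\|=\dist_H(x,U_n)\le\|x-Q_nx\|\le c\|x-Q_nx\|_V\le c\|I-Q_n\|_{\mlin(W,V)}\|x\|_W,
\end{equation*}
where the middle inequality uses the embedding constant \eqref{eq:emb-const}. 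Taking the supremum over $\|x\|_W\le 1$ gives the claim.

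Next comes the main estimate, which is the harder part because it mixes the two different inverses. The idea is a Galerkin-orthogonality/Céa-type argument. Fix $y\in H$ and set $x=A^{-1}y\in\D(A)\subset W$ and $x_n=A_n^{-1}P_ny\in U_n$. The element $x_n$ is the Galerkin approximation of $x$: for all $v\in U_n$ one has $a(x_n,v)=\iprod{A_nx_n}{v}=\iprod{P_ny}{v}=\iprod{y}{v}=a(x,v)$, so $a(x-x_n,v)=0$ for every $v\in U_n$. Using coercivity and boundedness of $a$ from \eqref{eq:coercive}, together with this Galerkin orthogonality against the particular choice $v=Q_nx-x_n\in U_n$, I would run the standard Céa argument to obtain $\|x-x_n\|_V\le\tfrac{M}{\gamma}\|x-Q_nx\|_V$. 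Then convert back to the $H$-norm via \eqref{eq:emb-const} and insert $\|x\|_W=\|A^{-1}y\|_W\le\|A^{-1}\|_{\mlin(H,W)}\|y\|$ and $\|x-Q_nx\|_V\le\|I-Q_n\|_{\mlin(W,V)}\|x\|_W$; collecting constants yields $\|(A^{-1}-A_n^{-1}P_n)y\|=\|x-x_n\|\le\tfrac{cM}{\gamma}\|A^{-1}\|_{\mlin(H,W)}\|I-Q_n\|_{\mlin(W,V)}\|y\|$, which is the desired bound after taking the supremum over $\|y\|=1$.

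Finally I would assemble the verification of the uniform approximation scheme. Condition (b) requires $\sup_n\|P_n\|<\infty$ and $\|I-P_n\|_{\mlin(W,H)}\to 0$: the former holds because the $P_n$ are orthogonal projections (so $\|P_n\|=1$), and the latter follows from the first estimate combined with \eqref{eq:interpol-op}. Condition (c) requires $\|A^{-1}-A_n^{-1}P_n\|\to 0$, which is immediate from the second estimate and \eqref{eq:interpol-op}. The main obstacle I anticipate is getting the Céa estimate constant to come out exactly as $\tfrac{M}{\gamma}$; I must be careful that $x_n\in U_n$ really is the $a$-Galerkin projection (which rests on the compatibility $a(x,v)=\iprod{Ax}{v}=\iprod{y}{v}$ for $v\in U_n\subset V$, valid since $x\in\D(A)$) and that coercivity is applied in the $V$-norm to the error $x-x_n$, not in the $H$-norm. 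Everything else is routine bookkeeping with the embedding constant $c$ and the two form constants $M,\gamma$.
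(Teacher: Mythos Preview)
Your proposal is correct and follows essentially the same route as the paper: the first estimate via the best-approximation property of the orthogonal projection $P_n$ and the embedding constant $c$, the second via Galerkin orthogonality and C\'ea's lemma applied to $x=A^{-1}y$ and its Galerkin approximation $x_n=A_n^{-1}P_ny$, and then assembling (b) and (c) from \eqref{eq:interpol-op}. The only cosmetic difference is that the paper invokes C\'ea's lemma as a black box (citing \cite[Theorem~VII.5.A]{Sho}) whereas you sketch its proof with the test function $v=Q_nx-x_n$; the resulting constants and logic are identical.
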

\begin{proof}
  For $w\in W$ we calculate
  \begin{align*}
    \|w-P_nw\| &= \inf_{u\in U_n}\|w-u\| \le \|w-Q_nw\|\le
    c\|w-Q_nw\|_V\\ & \le c \|I-Q_n\|_{\mlin(W,V)} \|w\|_W,
  \end{align*}
  which shows the first assertion. Moreover, for $f\in H$ we set $x=A^{-1}f$
  and $x_n=A_n^{-1}P_nf$. Then we obtain
  \begin{align*}
    a(x,y)&= \iprod{Ax}{y} = \iprod{f}{y},\quad y\in V,\\
    a(x_n,u)&=\iprod{A_nx_n}{u} = \iprod{P_nf}{u}= \iprod{f}{u},\quad u\in U_n.
  \end{align*}
  Using the Lemma of Cea \cite[Theorem VII.5.A]{Sho},
  we find
  \begin{align*}
    \|A^{-1}f-A_n^{-1}P_nf\| &=\|x-x_n\|\le c\|x-x_n\|_V\le \frac{cM}{\gamma}
    \inf_{u\in U_n}\|x-u\|_V\\ &\le \frac{cM}{\gamma} \|x-Q_n x\|_V \le \frac{cM}{\gamma}
    \|I-Q_n\|_{\mlin(W,V)}\|x\|_W \\ & \le \frac{cM}{\gamma} \|I-Q_n\|_{\mlin(W,V)}
    \|A^{-1}\|_{\mlin(H,W)}\|f\|,
  \end{align*}
  which implies the second assertion.
\end{proof}

\begin{theo}\label{theo:psincl-fe}
  Let $A$ be the operator associated with the coercive form $a$
  and let $A_n$, $Q_n$ be as above. 
  Let
  \[r>0,\quad \lv{0<\varepsilon < \frac{1}{\|A^{-1}\|} \quad\text{and}\quad
  \frac{\|A^{-1}\|^2\varepsilon}{1-\|A^{-1}\|\eps}<\delta\leq \frac{\|A^{-1}\|^2\varepsilon}{1-\|A^{-1}\|\eps} + \frac{7}{2}\|A^{-1}\|}.\]
  If
  $n_0\in\N$ is such that for every $n\ge n_0$
  \lv{\[ \|I-Q_n\|_{\mlin(W,V)}<
  \frac{\delta-\frac{\|A^{-1}\|^2\varepsilon}{1-\|A^{-1}\|\eps}}
       {c\|A^{-1}\|_{\mlin(H,W)}\left(\frac{M}{\gamma}
         + (r+\varepsilon)\frac{7c^2}{\gamma}\right)},\]}
  then
  \[\sigma_\varepsilon(A)\cap\overline{B_{r}(0)}
  \subset \left(B_\delta(W(A^{-1}_n))\right)^{-1}
  \quad\text{for all}\quad n\ge n_0. \]
\end{theo}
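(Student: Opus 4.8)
The plan is to combine Proposition~\ref{prop:psincl-unif} with the uniform-approximation estimates from Lemma~\ref{lem:fe-unifapprox}. Since the finite element setup here (with the orthogonal projections $P_n$, the associated operators $A_n$, and the interpolation operators $Q_n$ satisfying \eqref{eq:interpol-op}) produces a uniform approximation scheme by Lemma~\ref{lem:fe-unifapprox}, we are precisely in the situation of Proposition~\ref{prop:psincl-unif}. The conclusion $\sigma_\varepsilon(A)\cap\overline{B_r(0)}\subset(B_\delta(W(A_n^{-1})))^{-1}$ is identical in both statements, so the entire task reduces to showing that the simpler index condition stated here in terms of $\|I-Q_n\|_{\mlin(W,V)}$ implies the more abstract index condition of Proposition~\ref{prop:psincl-unif}.

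First I would recall the two estimates from Lemma~\ref{lem:fe-unifapprox}, namely $\|I-P_n\|_{\mlin(W,H)}\le c\|I-Q_n\|_{\mlin(W,V)}$ and $\|A^{-1}-A_n^{-1}P_n\|\le\frac{cM}{\gamma}\|A^{-1}\|_{\mlin(H,W)}\|I-Q_n\|_{\mlin(W,V)}$. Next I would bound the constant $C_0$ from \eqref{eq:C0}. In this finite element setting $\|P_n\|=1$ and $\|A_n^{-1}\|\le c^2/\gamma$, so $C_0=\sup_n(\|A_n^{-1}\|\|P_n\|+6\|A_n^{-1}\|\|P_n\|^2)\le 7c^2/\gamma$. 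I would then substitute both Lemma estimates and this bound on $C_0$ into the left-hand side of the abstract index condition
\[\|A^{-1}-A_n^{-1}P_n\|+(r+\varepsilon)\|A^{-1}\|_{\mlin(H,W)}C_0\|I-P_n\|_{\mlin(W,H)}.\]
Using $\|A_n^{-1}\|\le c^2/\gamma$ to pass through the $C_0$ factor and $\|I-P_n\|_{\mlin(W,H)}\le c\|I-Q_n\|_{\mlin(W,V)}$, each summand becomes a constant multiple of $\|I-Q_n\|_{\mlin(W,V)}$; factoring out the common term $c\|A^{-1}\|_{\mlin(H,W)}\|I-Q_n\|_{\mlin(W,V)}$ leaves the bracket $\bigl(\frac{M}{\gamma}+(r+\varepsilon)\frac{7c^2}{\gamma}\bigr)$.

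Finally I would observe that the hypothesis on $n_0$ is exactly the statement that this product is strictly less than $\delta-\frac{\|A^{-1}\|^2\varepsilon}{1-\|A^{-1}\|\eps}$ for all $n\ge n_0$, which is the abstract condition required by Proposition~\ref{prop:psincl-unif}; invoking that proposition then yields the claim. The only mildly delicate bookkeeping is verifying that the constant $7c^2/\gamma$ dominates $C_0$ and that the two Lemma~\ref{lem:fe-unifapprox} estimates combine into a single clean multiple of $\|I-Q_n\|_{\mlin(W,V)}$ with the correct bracket; this is a routine chain of inequalities rather than a genuine obstacle. The admissible range for $\delta$, in particular the upper bound $\frac{\|A^{-1}\|^2\varepsilon}{1-\|A^{-1}\|\eps}+\frac72\|A^{-1}\|$, is inherited verbatim from Proposition~\ref{prop:psincl-unif} and requires no separate verification here.
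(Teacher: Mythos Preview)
Your proposal is correct and follows essentially the same route as the paper's proof: both reduce the theorem to Proposition~\ref{prop:psincl-unif} by plugging in the two estimates of Lemma~\ref{lem:fe-unifapprox} and bounding $C_0\le 7c^2/\gamma$ via $\|P_n\|=1$ and $\|A_n^{-1}\|\le c^2/\gamma$. The only difference is presentational---the paper compresses the whole chain of inequalities into a single displayed estimate, whereas you spell out each ingredient separately.
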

\begin{proof}
  We check that the conditions of Proposition~\ref{prop:psincl-unif}
  are satisfied:
  Using Lemma~\ref{lem:fe-unifapprox}, we estimate for $n\geq n_0$ and
  with $C_0$ from \eqref{eq:C0},
  \begin{align*}
    &\|A^{-1}-A_n^{-1}P_n\|+(r+\eps)\|A^{-1}\|_{\mlin(H,W)}C_0\|I-P_n\|_{\mlin(W,H)}\\
    &\leq c \|A^{-1}\|_{\mlin(H,W)}\|I-Q_n\|_{\mlin(W,V)}
    \left( \frac{M}{\gamma}  + (r+\varepsilon) \frac{7c^2}{\gamma} \right)
    \lv{<\delta-\frac{\|A^{-1}\|^2\varepsilon}{1-\|A^{-1}\|\eps}}.\qedhere
  \end{align*}
\end{proof}

\begin{example}\label{ex:elliptic-op}
  Let $\Omega\subset \R^ 2$ be a bounded, open, convex domain with
polygonal boundary $\Gamma$ and $\Gamma_D\subset \Gamma$ a union of
polygons of $\Gamma$. Let
\[V=H_0^1(\Omega),\]
equipped with the $H^1$-norm.
On $V$ we consider the sesquilinear form
\begin{equation}\label{form}
  a(u,v)=\int_\Omega \left(\sum_{i,j=1}^2 a_{ij} u_{x_i}\overline{v}_{x_j}+ \sum_{i=1}^2 b_{i} u_{x_i}\overline{v}+cu\overline{v}\right)dx,
\end{equation}
where $a_{ij}\in C^{0,1}(\overline\Omega)$ and $b_i,c\in L^\infty(\Omega)$.
We suppose that $a$ is coercive and uniformly elliptic.
Let $\{\mathcal{T}_n\}_{n\in\N}$ be a family of
nested, admissible and quasi-uniform triangulations of $\Omega$
satisfying $\sup_{T\in \mathcal{T}_n} {\rm diam}(T)\le \frac{1}{n}$. Let
\[ W= H^2(\Omega)\cap H_0^1(\Omega),\]
equipped with the $H^2$-norm, and
\[ U_n=\set{ u\in C^0(\overline{\Omega})}{ u|_T\in \bbP_1(T),
  T\in \mathcal{T}_n, u|_{\Gamma}=0},\quad n\in\N.\]
Here $\bbP_1(T)$ denotes the set of polynomials of degree 1 on the triangle $T$.
We get $U_n\subset V$. 
Moreover, the operator $A$ associated with $a$ is given by
\begin{align*}
  Au&=-\sum_{i,j=1}^2 \partial_{x_j}(a_{ij} u_{x_i})
  + \sum_{i=1}^2 b_{i} u_{x_i}+cu,\\
  \mdef(A)&=W.
\end{align*}
For the proof of $\mdef(A)=W$ we refer to
\cite[Theorem~3.2.1.2 and \S2.4.2]{Gri85}.

By the Sobolev embedding theorem we have
$H^2(\Omega)\hookrightarrow C^0(\overline{\Omega})$.
For $u\in W$ we define $Q_nu$ as the unique
element of $U_n$ satisfying $(Q_nu)(x)=u(x)$ for every vertex of the
triangulation $\mathcal{T}_n$.
Then $Q_n\in\mlin(W,V)$
with $\range(Q_n)\subset U_n$.
Moreover, \cite[Theorem 9.27]{Arendt} implies
that there is a constant $K>0$ such that
\[ \|I-Q_n\|_{\mlin(W,V)}\le \frac{K}{n},\qquad n\in\N.\]
We conclude that Theorem~\ref{theo:psincl-fe} can be applied in this example
with $n_0\in\N$ chosen such that
\[n_0>\frac{Kc\|A^{-1}\|_{\mlin(H,W)}
  \left(\frac{M}{\gamma}  + (r+\varepsilon)\frac{7c^2}{\gamma}\right)}
   {\delta-2\|A^{-1}\|^2\varepsilon}.
\]
\end{example}

Note that in Example~\ref{ex:elliptic-op} we can also consider $\Omega$
to be an open interval in $\R$.
All results continue to hold in an analogous way.

  

  

\section{Discretization of a structured block operator matrix}
\label{sec:discr-block}

In this section we investigate discretizations of a certain kind
of block operator matrices.
We consider block matrices of the form
\[\A=\pmat{A&B\\B^*&D}\]
where $A$ is a closed, densely defined operator $A:\mdef(A)\subset H\to H$
on the Hilbert space $H$, and $B,D\in\mlin(H)$.
Then the block matrix $\A$ is a closed, densely defined operator on the
product space $H\times H$ with domain $\mdef(\A)=\mdef(A)\times H$.
Additionally we assume that $0\in\varrho(A)$, $0\in\varrho(D)$ and
that both $A$ and $-D$ are \emph{uniformly accretive}, i.e., there exist
constants $\gamma_A,\gamma_D>0$ such that
\begin{align}
  \label{eq:A-uaccret}
  \Real\iprod{Ax}{x}&\geq\gamma_A\|x\|^2, \qquad x\in\mdef(A),\\
  \label{eq:D-uaccret}
  \Real\iprod{Dx}{x}&\leq-\gamma_D\|x\|^2, \qquad x\in H.
\end{align}

In the next lemma we show that under the above assumptions
there is  a gap in the spectrum of $\A$ along the imaginary axis,
and we also prove an estimate for the norm of the resolvent.
Similar results
were obtained in \cite{lan-tret98,lan-tret01} under
the additional assumption that
$A$ is sectorial and,  in \cite{lan-tret01},
without the condition that $B$ and $D$ are bounded.
However, no corresponding resolvent estimates
were shown.
We remark that the boundedness of $D$ is  not essential
in Lemma~\ref{lem:bomgap} but will be used thereafter.
\begin{lemma}
  \label{lem:bomgap}
  We have
  \(\set{\lambda\in\C}{-\gamma_D<\Real\lambda<\gamma_A}\subset\varrho(\A)\)
  and
  \[\|(\A-\lambda)^{-1}\|\leq\frac{1}{\min\{\gamma_A-\Real\lambda,
    \gamma_D+\Real\lambda\}},\quad -\gamma_D<\Real\lambda<\gamma_A.\]
\end{lemma}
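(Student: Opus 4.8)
The plan is to prove the resolvent estimate directly by establishing a lower bound of the form $\|(\A-\lambda)z\|\geq \min\{\gamma_A-\Real\lambda,\gamma_D+\Real\lambda\}\,\|z\|$ for all $z\in\mdef(\A)$, since such an estimate simultaneously shows that $\A-\lambda$ is injective with closed range and gives the desired norm bound on the inverse, provided we also argue surjectivity. First I would write $z=(x,y)\in\mdef(A)\times H$ and compute the inner product $\iprod{(\A-\lambda)z}{z}$, which expands as
\[
\iprod{(A-\lambda)x}{x}+\iprod{By}{x}+\iprod{B^*x}{y}+\iprod{(D-\lambda)y}{y}.
\]
The two middle terms are $\iprod{By}{x}+\iprod{B^*x}{y}=\iprod{By}{x}+\overline{\iprod{By}{x}}$, which is real, so taking real parts eliminates the off-diagonal coupling entirely. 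This is the key structural observation that the self-adjoint off-diagonal arrangement $\smallmat{A&B\\B^*&D}$ buys us.

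Taking real parts and using the accretivity assumptions \eqref{eq:A-uaccret} and \eqref{eq:D-uaccret}, I would obtain
\[
\Real\iprod{(\A-\lambda)z}{z}
=\Real\iprod{(A-\lambda)x}{x}+\Real\iprod{(D-\lambda)y}{y}
\geq(\gamma_A-\Real\lambda)\|x\|^2-(\gamma_D+\Real\lambda)(-\|y\|^2),
\]
so that $\Real\iprod{(\A-\lambda)z}{z}\geq(\gamma_A-\Real\lambda)\|x\|^2+(\gamma_D+\Real\lambda)\|y\|^2$. When $-\gamma_D<\Real\lambda<\gamma_A$ both coefficients are positive, and bounding below by their minimum gives $\Real\iprod{(\A-\lambda)z}{z}\geq m\,\|z\|^2$ with $m=\min\{\gamma_A-\Real\lambda,\gamma_D+\Real\lambda\}$. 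Cauchy--Schwarz then yields $\|(\A-\lambda)z\|\,\|z\|\geq m\|z\|^2$, hence the lower bound $\|(\A-\lambda)z\|\geq m\|z\|$.

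This lower bound immediately gives injectivity and, since $\A$ is closed, closedness of the range of $\A-\lambda$; it also gives $\|(\A-\lambda)^{-1}\|\leq 1/m$ once we know $\lambda\in\varrho(\A)$. The remaining obstacle, and the part I expect to require the most care, is surjectivity of $\A-\lambda$, i.e.\ showing the range is all of $H\times H$ and not merely closed. I would handle this by applying the same accretivity computation to the adjoint: the block structure makes $\A^*=\smallmat{A^*&B\\B^*&D^*}$ of the same type, and $A^*$, $-D^*$ inherit uniform accretivity with the same constants (since $\Real\iprod{A^*x}{x}=\Real\iprod{x}{Ax}=\Real\iprod{Ax}{x}$). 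Thus the identical argument gives $\|(\A^*-\overline{\lambda})z\|\geq m\|z\|$, so $\A^*-\overline{\lambda}$ is injective, which means $(\A-\lambda)$ has dense range; combined with closed range this yields surjectivity. Hence $\lambda\in\varrho(\A)$ with the stated resolvent bound. One should be a little careful that $\A$ is genuinely closed with the stated domain $\mdef(A)\times H$ — this follows from $A$ being closed and $B,B^*,D$ bounded — so that the closed-range argument applies; this is the only point where the boundedness assumptions on $B$ and $D$ enter.
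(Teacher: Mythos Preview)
Your overall strategy---a lower bound via an inner-product estimate, then surjectivity via the adjoint---matches the paper's, but there is a genuine error in the key computation. You correctly observe that $\iprod{By}{x}+\iprod{B^*x}{y}=\iprod{By}{x}+\overline{\iprod{By}{x}}=2\Real\iprod{By}{x}$ is real; however, being real means that taking the real part returns $2\Real\iprod{By}{x}$ itself, \emph{not} zero. So the off-diagonal coupling is \emph{not} eliminated, and your claimed inequality $\Real\iprod{(\A-\lambda)z}{z}\geq(\gamma_A-\Real\lambda)\|x\|^2+(\gamma_D+\Real\lambda)\|y\|^2$ fails in general (take e.g.\ $B$ large). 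The paper fixes exactly this by inserting the sign matrix $J=\smallmat{I&0\\0&-I}$ and computing $\Real\iprod{J(\A-\lambda)z}{z}$ instead: the cross terms then become $\iprod{By}{x}-\iprod{B^*x}{y}=2\iu\,\Imag\iprod{By}{x}$, which is purely imaginary and does vanish upon taking real parts. Since $\|Jw\|=\|w\|$, the Cauchy--Schwarz step goes through unchanged and yields $\|(\A-\lambda)z\|\geq c_\lambda\|z\|$.

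A second, smaller gap: your argument that $A^*$ is uniformly accretive via $\Real\iprod{A^*x}{x}=\Real\iprod{x}{Ax}=\Real\iprod{Ax}{x}$ only makes sense for $x\in\mdef(A)\cap\mdef(A^*)$, which need not be all of $\mdef(A^*)$ when $A$ is unbounded. The paper handles this by noting that \eqref{eq:A-uaccret} together with $0\in\varrho(A)$ makes $A-\gamma_A$ m-accretive, and m-accretivity passes to the adjoint, giving $\Real\iprod{A^*x}{x}\geq\gamma_A\|x\|^2$ for all $x\in\mdef(A^*)$.
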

\begin{proof}
  Consider the block operator matrix
  \[J=\pmat{I&0\\0&-I}.\]
  A simple calculation shows that for
  \(\lambda\in U:=\set{\lambda\in\C}{-\gamma_D<\Real\lambda<\gamma_A}\)
  and $x\in\mdef(A)$, $y\in H$,
  \begin{align*}
    &\Real\bigiprod{J(\A-\lambda)\pmat{x\\y}}{\pmat{x\\y}}\\
    &=\Real\bigl(\iprod{(A-\lambda)x}{x}+\iprod{By}{x}-\iprod{B^*x}{y}
    -\iprod{(D-\lambda)y}{y}\bigr)\\
    &=\Real\iprod{(A-\lambda)x}{x}-\Real\iprod{(D-\lambda)y}{y}\\
    &\geq(\gamma_A-\Real\lambda)\|x\|^2+(\gamma_D+\Real\lambda)\|y\|^2\\
    &\geq c_\lambda\left\|\pmat{x\\y}\right\|^2,
  \end{align*}
  where $c_\lambda=\min\{\gamma_A-\Real\lambda,\gamma_D+\Real\lambda\}$.
  It follows that
  \[\|J(\A-\lambda)v\| \|v\|
  \geq|\iprod{J(\A-\lambda)v}{v}|
  \geq c_\lambda\|v\|^2, \qquad v\in\mdef(\A),\]
  and therefore, since $\|Jw\|=\|w\|$ for all $w\in H\times H$,
  \begin{equation}\label{eq:bom-reg}
    \|(\A-\lambda)v\|\geq c_\lambda\|v\|,\qquad v\in\mdef(\A).
  \end{equation}
  In particular $\lambda\not\in\sigmaapp(\A)$, i.e.,
  $U\cap\sigmaapp(\A)=\varnothing$.
  The adjoint of $\A$ is the block operator matrix
  \[\A^*=\pmat{A^*&B\\B^*&D^*},\]
  which also satisfies the assumptions of this lemma.
  Indeed, \eqref{eq:D-uaccret} obviously also holds for $D^*$.
  Moreover,
  the uniform accretivity \eqref{eq:A-uaccret} of $A$ together with $0\in\varrho(A)$
  imply that $A-\gamma_A$ is
  \emph{m-accretive}, see \cite[\S V.3.10]{Kato95}.
  This in turn yields that $A^*-\gamma_A$ is m-accretive
  too and hence
  \[\Real\iprod{A^*x}{x}\geq\gamma_A\|x\|^2,\qquad x\in\mdef(A^*).\]
  It follows that \eqref{eq:bom-reg} also holds for $\A^*$.
  In particular $\ker\A^*=\{0\}$ or, equivalently, $\range(\A)\subset H\times H$
  is dense.
  On the other hand, \eqref{eq:bom-reg}
  implies that $\ker \A=\{0\}$ and that $\range(\A)$ is closed.
  Consequently $\range(\A)=H\times H$ and therefore $0\in\varrho(\A)$.
  Using  $\partial\sigma(\A)\subset\sigmaapp(\A)$
  and the connectedness of the set $U$,
  we obtain $U\subset\varrho(\A)$. Now \eqref{eq:bom-reg} implies
  $\|(\A-\lambda)^{-1}\|\leq 1/c_\lambda$ for all $\lambda\in U$.
\end{proof}

We consider approximations $\A_n$ of $\A$ of the  form
\[\A_n=\pmat{A_n&B_n\\B_n^*&D_n}\]
where
\begin{enumerate}
\item
  $(P_n,A_n)_{n\in\N}$ is a family which approximates $A$ strongly in the sense
  of Section~\ref{sec:strappr};
\item  all projections $P_n$
  are orthogonal and all $A_n$ are uniformly accretive with the same
  constant $\gamma_A$ as in \eqref{eq:A-uaccret};
\item
  $B_n=P_nB|_{U_n}$, $D_n=P_nD|_{U_n}$ where $U_n=\range(P_n)$
\end{enumerate}

\begin{lemma}\label{lem:bomapprox}
  \begin{enumerate}
  \item
    \(\set{\lambda\in\C}{-\gamma_D<\Real\lambda<\gamma_A}\subset\varrho(\A_n)\)
    and
    \[\|(\A_n-\lambda)^{-1}\|\leq\frac{1}{\min\{\gamma_A-\Real\lambda,
      \gamma_D+\Real\lambda\}},\quad -\gamma_D<\Real\lambda<\gamma_A.\]
  \item
    $(\cP_n,\A_n)_{n\in\N}$ approximates $\A$ strongly where
    $\cP_n=\diag(P_n,P_n)$.
  \end{enumerate}
\end{lemma}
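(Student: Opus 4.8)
The plan is to handle part (a) by verifying that $\A_n$ inherits the structural hypotheses of Lemma~\ref{lem:bomgap}, and part (b) by checking the consistency criterion of Lemma~\ref{lem:konsistenz}.

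For part (a), each $A_n$ is invertible (since $(P_n,A_n)$ approximates $A$ strongly) and uniformly accretive with the same constant $\gamma_A$ by assumption. The orthogonality of $P_n$ is the key ingredient for the lower-right block: for $x\in U_n$ we have $\iprod{D_nx}{x}=\iprod{P_nDx}{x}=\iprod{Dx}{P_nx}=\iprod{Dx}{x}$, so $\Real\iprod{D_nx}{x}\le-\gamma_D\|x\|^2$ by \eqref{eq:D-uaccret}. Thus $-D_n$ is uniformly accretive with the same $\gamma_D$, whence $D_n$ is injective and therefore invertible on the finite-dimensional space $U_n$, giving $0\in\varrho(D_n)$. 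Since $A_n,B_n,D_n$ are bounded and meet every requirement placed on $A,B,D$ in Lemma~\ref{lem:bomgap}, that lemma applies verbatim to $\A_n$ and yields the claimed spectral gap and resolvent estimate.

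For part (b), I would first note that $\cP_n=\diag(P_n,P_n)$ is a projection onto $U_n\times U_n$ with $\cP_nv\to v$ for every $v$, directly from $P_nx\to x$. Taking $\lambda=0$ in part (a), which is permissible since $-\gamma_D<0<\gamma_A$, shows $\A_n$ is invertible with $\sup_n\|\A_n^{-1}\|\le 1/\min\{\gamma_A,\gamma_D\}<\infty$. By Lemma~\ref{lem:konsistenz} it then suffices, for each $v=\pmat{x\\y}\in\mdef(A)\times H$, to produce $v_n=\pmat{x_n\\y_n}\in U_n\times U_n$ with $v_n\to v$ and $\A_nv_n\to\A v$. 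Strong approximation of $A$ supplies $x_n\in U_n$ with $x_n\to x$ and $A_nx_n\to Ax$, and I would simply set $y_n=P_ny\to y$.

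It then remains to check that each entry of $\A_nv_n$ converges to the corresponding entry of $\A v$. Writing $B_n=P_nB|_{U_n}$ and identifying the adjoint of $B_n$ in $U_n$ as $B_n^*=P_nB^*|_{U_n}$ (once more by orthogonality of $P_n$), every off-diagonal and diagonal contribution has the form $P_nTu_n$ with $T\in\{B,B^*,D\}$ bounded and $u_n\in\{x_n,y_n\}$ converging to the appropriate limit $u$. The splitting $P_nTu_n-Tu=P_nT(u_n-u)+(P_n-I)Tu$, together with $\sup_n\|P_n\|<\infty$ and $P_n\to I$ strongly, gives $P_nTu_n\to Tu$; combined with $A_nx_n\to Ax$ this yields $\A_nv_n\to\A v$, and Lemma~\ref{lem:konsistenz} completes the proof. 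The step I would be most careful about is precisely this identification of $B_n^*$ as the $U_n$-adjoint of $B_n$, which, like the accretivity transfer for $D_n$, hinges essentially on the orthogonality of the projections $P_n$; without it the discretized block structure would no longer align with that of $\A$.
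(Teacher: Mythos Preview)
Your proof is correct and follows essentially the same route as the paper's: part~(a) by transferring the accretivity of $-D$ to $-D_n$ via orthogonality of $P_n$ and then invoking Lemma~\ref{lem:bomgap}, and part~(b) by the consistency criterion of Lemma~\ref{lem:konsistenz} with $x_n$ coming from the strong approximation of $A$ and $y_n=P_ny$. If anything, you are slightly more explicit than the paper in checking $0\in\varrho(D_n)$ and the uniform bound on $\|\A_n^{-1}\|$ before appealing to Lemma~\ref{lem:konsistenz}.
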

\begin{proof}
  \begin{enumerate}
  \item
    From
    \[\iprod{D_nx}{x}=\iprod{P_nDx}{x}=\iprod{Dx}{x},\quad x\in U_n,\]
    it follows that $-D_n$ is uniformly accretive with constant $\gamma_D$
    from \eqref{eq:D-uaccret}.
    Consequently Lemma~\ref{lem:bomgap} can be applied to $\A_n$.
  \item
    In view of (a) and Lemma~\ref{lem:konsistenz} it suffices to show
    that for all $(x,y)\in\mdef(A)\times H$ there exist
    $(x_n,y_n)\in U_n\times U_n$ such that
    \begin{equation}\label{eq:bom-konsistenz}
      \lim_{n\to\infty}\pmat{x_n\\y_n}=\pmat{x\\y},\qquad
      \lim_{n\to\infty}\A_n\pmat{x_n\\y_n}=\A\pmat{x\\y}.
    \end{equation}
    Let $(x,y)\in\mdef(A)\times H$. From Lemma~\ref{lem:konsistenz}
    we get $x_n\in U_n$ with $x_n\to x$ and $A_nx_n\to Ax$ as $n\to\infty$.
    Set $y_n=P_ny$. Then $y_n\to y$ and
    \begin{align*}
      \|D_ny_n-Dy\|&\leq\|P_n(Dy_n-Dy)\|+\|P_nDy-Dy\|\\
      &\leq\|Dy_n-Dy\|+\|P_nDy-Dy\|\to 0,
      \quad n\to\infty,
    \end{align*}
    i.e., $D_ny_n\to Dy$. The proof of $B_ny_n\to By$ and $B_n^*x_n\to B^*x$
    is the same after the additional observation $B_n^*=P_nB^*|_{U_n}$.
    Hence we have shown \eqref{eq:bom-konsistenz}.\qedhere
  \end{enumerate}
\end{proof}

\begin{theo}
  Let
  \(s_1,\dots,s_m\in\set{\lambda\in\C}{-\gamma_D<\Real\lambda<\gamma_A}\).
  Let
  \lv{\(0<\eps < \min_{j=1,\dots,m}\left(\min\{\gamma_A-\Real s_j,
  \gamma_D+\Real s_j\}\right)\)}
  and
  \lv{
  \begin{align*}
  \delta_j>\frac{\eps}{\min\{\gamma_A-\Real s_j,
  \gamma_D+\Real s_j\}^2-\eps \min\{\gamma_A-\Real s_j,
  \gamma_D+\Real s_j\}}
  \end{align*}}
  for $j=1,\dots,m$. Then there exists $n_0\in\N$ such that
  \[\sigma_\eps(\A)\subset \bigcap_{j=1}^m
  \left[\left(B_{\delta_j}(W((\A_n-s_j)^{-1}))\right)^{-1}+s_j\right]
  \quad\text{for all}\quad n\geq n_0.\]
\end{theo}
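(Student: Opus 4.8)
The plan is to obtain this theorem as a direct application of the strong-approximation enclosure Theorem~\ref{theo:psincl-str-shift} to the block operator $\A$ and its discretizations $\A_n$, so that essentially all of the work consists in verifying that the hypotheses of that theorem hold and that the explicit constants stated here are consistent with the abstract bounds appearing there.

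First I would collect the structural facts already at hand. By Lemma~\ref{lem:bomapprox}(b) the family $(\cP_n,\A_n)_{n\in\N}$ approximates $\A$ strongly, which is the standing hypothesis of Theorem~\ref{theo:psincl-str-shift}. Writing $c_j=\min\{\gamma_A-\Real s_j,\gamma_D+\Real s_j\}$, the assumption that each $s_j$ lies in the strip $\set{\lambda\in\C}{-\gamma_D<\Real\lambda<\gamma_A}$ together with Lemma~\ref{lem:bomgap} gives $s_j\in\varrho(\A)$ with $\|(\A-s_j)^{-1}\|\le 1/c_j$, while Lemma~\ref{lem:bomapprox}(a) gives $s_j\in\varrho(\A_n)$ for every $n$, with the same bound $\|(\A_n-s_j)^{-1}\|\le 1/c_j$. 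In particular $\sup_{n\in\N}\|(\A_n-s_j)^{-1}\|<\infty$, so the shift hypotheses of Theorem~\ref{theo:psincl-str-shift} are satisfied.

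The only genuine bookkeeping is to check that the constants stated here are at least as strong as those required in Theorem~\ref{theo:psincl-str-shift}. For the bound on $\eps$: from $\|(\A-s_j)^{-1}\|\le 1/c_j$ we get $1/\|(\A-s_j)^{-1}\|\ge c_j$, so the hypothesis $\eps<\min_j c_j$ forces $\eps<1/\|(\A-s_j)^{-1}\|$ for every $j$, as required. For the bound on $\delta_j$ I would invoke that the map $t\mapsto \frac{t^2\eps}{1-t\eps}$ is increasing on $(0,1/\eps)$; evaluating at the admissible value $t=1/c_j\ge\|(\A-s_j)^{-1}\|$ yields
\[
\frac{\|(\A-s_j)^{-1}\|^2\eps}{1-\|(\A-s_j)^{-1}\|\eps}
\le \frac{\eps}{c_j^2-\eps c_j},
\]
so the stated condition $\delta_j>\frac{\eps}{c_j^2-\eps c_j}$ implies the required $\delta_j>\frac{\|(\A-s_j)^{-1}\|^2\eps}{1-\|(\A-s_j)^{-1}\|\eps}$.

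With all hypotheses verified, Theorem~\ref{theo:psincl-str-shift} applied to $\A$, the matrices $\A_n$, and the shifts $s_1,\dots,s_m$ delivers the asserted inclusion for all $n\ge n_0$. I do not expect any serious obstacle here: every analytic ingredient---the resolvent strip, the uniform resolvent bounds, and the strong approximation---is already established in Lemmas~\ref{lem:bomgap} and~\ref{lem:bomapprox}, so the proof reduces to the monotonicity comparison of constants sketched above and a citation of the abstract theorem.
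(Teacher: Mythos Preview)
Your proposal is correct and follows essentially the same route as the paper: cite Lemmas~\ref{lem:bomgap} and~\ref{lem:bomapprox} for the resolvent bounds and the strong approximation, then invoke Theorem~\ref{theo:psincl-str-shift}. Your write-up is in fact more explicit than the paper's, since you verify the $\delta_j$ condition via the monotonicity of $t\mapsto t^2\eps/(1-t\eps)$ on $(0,1/\eps)$, which the paper leaves implicit.
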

\begin{proof}
  Lemma~\ref{lem:bomgap} and Lemma~\ref{lem:bomapprox} imply
  \lv{\[\|(\A-s_j)^{-1}\|\leq\frac{1}{\min\{\gamma_A-\Real s_j,
  \gamma_D+\Real s_j\}}\leq\frac{1}{\eps},\quad
  \|(\A_n-s_j)^{-1}\|\leq\frac{1}{\eps},\]}
  and hence the assertion follows from Theorem~\ref{theo:psincl-str-shift}.
\end{proof}

\begin{remark}
  Suppose that $A$ is the operator associated with a coercive sesqui-\\linear
  form $a$ on $V\subset H$ and that $U_n$, $W$, $P_n\in\mlin(H)$, $A_n\in\mlin(U_n)$
  are chosen as in Section~\ref{sec:fe-discr}.
  Then $(P_n,A_n)$ approximates $A$ uniformly, and hence also strongly,
  see Remark~\ref{rem:unifap}.
  Moreover, the coercivity of $a$ implies that $A$ and all $A_n$ are uniformly
  accretive with  constant $\gamma_A=\gamma$ from \eqref{eq:coercive}.
  Hence all assumptions of this section are fulfilled in this case.
\end{remark}

\section{Numerical examples}\label{sec:numericalexamples}
In order to exemplify the previously developed theory we take a look at the results of numerical computations. We investigate the steps that were involved in the discretization of a given operator and describe a visualization of supersets of the pseudospectrum.

\begin{example}
In this example we will examine the Hain-L\"ust operator which fits into the framework of section \ref{sec:discr-block}. See \cite{muh-mar12} and \cite{muh-mar13} for results on the approximation of the quadratic numerical range of such a block operator.
The Hain-L\"ust operator under consideration here is defined by
\begin{align*}
  \A = \pmat{
    A & B \\
    B^* & D
  }
\end{align*}
on the Hilbert space $L^2(0,1)\times L^2(0,1)$ where
$A=-\frac{1}{100}\frac{\partial^2}{\partial x^2} +2$, $B=
 I$ and $D = 2\e^{2\pi\iu\cdot}-3$ with $\D(A) =
\{u\in H^2(0,1) \mid u(0)=u(1)=0\}$, $\D(B)=\D(D)=L^2(0,1)$ and
$\D(\A) = \D(A)\oplus \D(D)$. Hence, for $u\in\D(\A)$ and $v\in
C^\infty(0,1)\times C^\infty(0,1)$ with $v(0)=v(1)=0$ we have
\lv{\begin{align}\label{eqn:sesquilinHL}
\iprod{\A u}{v} = &\int_0^1\left(\left(-\frac{1}{100}\frac{\partial^2}{\partial x^2} +2\right)u_1(x)+u_2(x)\right)\overline{v_1(x)}\,\mathrm{d}x \nonumber \\
& + \int_0^1\left( u_1(x) + \left(2\e^{2\pi\iu x}-3\right)u_2(x)\right)\overline{v_2(x)}\,\mathrm{d}x \nonumber \\
\begin{split}
= &\int_0^1\frac{1}{100}\frac{\partial}{\partial x}u_1(x)\frac{\partial}{\partial x}\overline{v_1(x)} + (2u_1(x)+u_2(x))\overline{v_1(x)}\,\mathrm{d}x\\
& + \int_0^1\left( u_1(x) + \left(2\e^{2\pi\iu x}-3\right)u_2(x)\right)\overline{v_2(x)}\,\mathrm{d}x.
\end{split}
\end{align}}
\begin{figure}
\includegraphics[width=\textwidth]{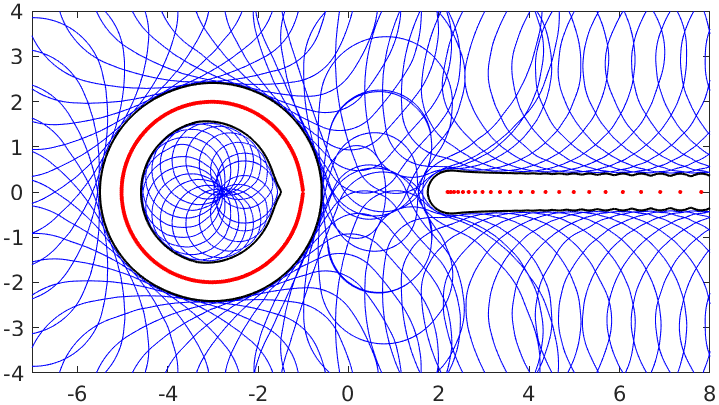}
\caption{Pseudospectrum approximation for the Hain-L\"ust operator}
\label{fig:plotHL}
\end{figure}
Let $\{\mathcal{T}_{\frac{1}{n}}\}_{n\in\N}$ be the family of decompositions of the interval $(0,1)$ where every subinterval $T\in \mathcal{T}_\frac{1}{n}$ is of length $\frac{1}{n}$ and let
\begin{align*}
U_n = \{u\in C(0,1) \mid u|_T \in \bbP_1(T), T\in \mathcal{T}_{\frac{1}{n}}, u(0)=u(1)=0\},\quad n\in\N.
\end{align*}
Here $\bbP_1(T)$ denotes the set of polynomials of degree 1 on the subinterval $T$. The piecewise linear functions
\begin{align*}
\widetilde{\varphi}_i = \begin{cases}
nx-i+1, \qquad & x\in(\frac{i-1}{n},\frac{i}{n}),\\
i+1-nx, \qquad & x\in(\frac{i}{n},\frac{i+1}{n}),\\
0, \qquad & \text{else},
\end{cases}
\end{align*}
for $i\in\{1,\dots,n-1\}$ form a basis of $U_n$ and therefore the functions
\begin{align*}
\varphi_i = \begin{cases}
(\widetilde{\varphi}_i,0), \qquad &i\leq n-1,\\
(0,\widetilde{\varphi}_{i-n+1}), \qquad &i>n-1,
\end{cases}
\end{align*}
for $i\in\{1,\dots,2(n-1)\}$ form a basis of $U_n\times U_n$. Evaluating \eqref{eqn:sesquilinHL} on these basis functions, the finite-element discretization matrices $\A_n$ of $\A$ are given by
\begin{align*}
\A_n = \left(\left(\iprod{\A\varphi_i}{\varphi_j}\right)_{i,j}\cdot\left(\iprod{\varphi_i}{\varphi_j}\right)_{i,j}^{-1}\right)^\intercal.
\end{align*}
Due to Lemma \ref{lem:bomapprox}, Theorem \ref{theo:psincl-str-shift} can be applied here. In order to illustrate the inclusion specified therein the boundaries of the sets \[\bigl(B_{\delta_j}(W((\A_n-s_j)^{-1}))\bigr)^{-1}+s_j\] (blue) are depicted in Figure \ref{fig:plotHL} for shifts $s_1,\dots,s_m\in\varrho(\A)$. The choice of the shifts was determined by the expected shape of the pseudospectrum aiming to obtain a relatively small superset thereof. They are located on two circles around $-3$ with radii greater and smaller than $2$ and on lines parallel to the real axis in the right half plane. Here \lv{$n=600$, $\delta_j=1.1\frac{\|(\A_n-s_j)^{-1}\|^2\eps}{1-\|(\A_n-s_j)^{-1}\|\eps}$} and \lv{$\eps\approx 0.4$}. The red dots are the eigenvalues of $\A_n$ while the black lines correspond to the boundaries of the pseudospectrum of the approximation matrix $\sigma_\eps(\A_n)$ computed by eigtool, see \cite{eigtool}. Note  that according to Theorem \ref{theo:psincl-str-shift} the intersection of the blue areas form an enclosure of the pseudospectrum of the actual operator $\sigma_\eps(\A)$, while the black lines only give the information for the
discretized operator.
Furthermore the spectral gap mentioned in Lemma \ref{lem:bomgap} becomes visible.
\end{example}

\begin{example}
\begin{figure}
\includegraphics[width=\textwidth]{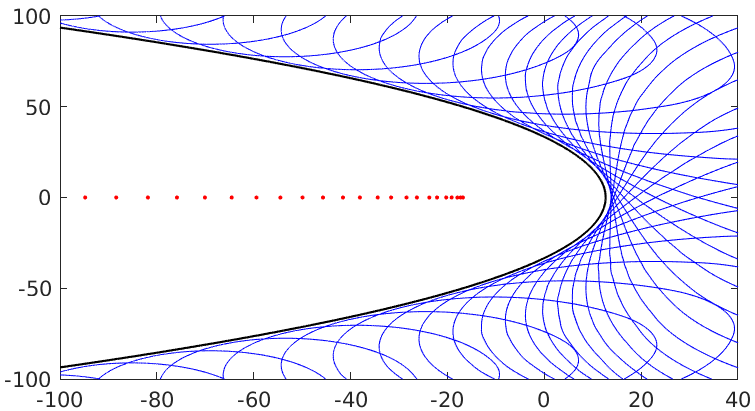}
\caption{Pseudospectrum approximation for the advection-diffusion operator}
\label{fig:plotAD}
\end{figure}
Let us consider the the advection-diffusion operator $A:\D(A)\subset L^2(0,1)\rightarrow L^2(0,1)$ defined by
\begin{align*}
A = \eta\frac{\partial^2}{\partial x^2} + \frac{\partial}{\partial x}
\end{align*}
with $\D(A) = \{u\in H^2(0,1) \mid u(0)=u(1)=0\}$, which has also been examined in \cite[pp. 115]{ETBook}. For $u\in \D(A)$ and $v\in C^\infty(0,1)$ we have
\lv{\begin{align}\label{eqn:sesquilinAD}
\langle Au,v\rangle &= \int_0^1 \left(\eta\frac{\partial^2}{\partial x^2}u(x) + \frac{\partial}{\partial x}u(x)\right)\overline{v(x)}\,\mathrm{d}x \nonumber \\
&= \int_0^1 \frac{\partial}{\partial x}u(x)\overline{v(x)} - \eta\frac{\partial}{\partial x}u(x)\frac{\partial}{\partial x}\overline{v(x)}\,\mathrm{d}x.
\end{align}}
As in the previous example let $\{\mathcal{T}_{\frac{1}{n}}\}_{n\in\N}$ be the family of decompositions of the interval $(0,1)$ where every subinterval $T\in \mathcal{T}_\frac{1}{n}$ is of length $\frac{1}{n}$ and let
\begin{align*}
U_n = \{u\in C(0,1) \mid u|_T \in \mathbb P_1(T), T\in \mathcal{T}_{\frac{1}{n}}, u(0)=u(1)=0\},\quad n\in\mathbb N.
\end{align*}
Here $\mathbb P_1(T)$ denotes the set of polynomials of degree 1 on the subinterval $T$. The piecewise linear functions
\begin{align*}
\varphi_i = \begin{cases}
nx-i+1, \qquad & x\in(\frac{i-1}{n},\frac{i}{n}),\\
i+1-nx, \qquad & x\in(\frac{i}{n},\frac{i+1}{n}),\\
0, \qquad & \text{else},
\end{cases}
\end{align*}
for $i\in\{1,\dots,n-1\}$ form a basis of $U_n$. Evaluating \eqref{eqn:sesquilinAD} on these basis functions, the finite-element discretization matrices $A_n$ of $A$ are given by
\begin{align*}
A_n = \left(\left(\langle A\varphi_i,\varphi_j\rangle\right)_{i,j}\cdot\left(\langle \varphi_i,\varphi_j\rangle\right)_{i,j}^{-1}\right)^\intercal.
\end{align*}
With the choice of $\eta = 0.015$, Figure \ref{fig:plotAD} shows the eigenvalues of $A_n$ for $n=40$ (red) and the sets
\[\bigl(B_{\delta_j}(W((A_n-s_j)^{-1}))\bigr)^{-1}+s_j\]
(blue) for a number of shifts $s_1,\dots,s_m$ where \lv{$\delta_j=1.1\frac{\|(A_n-s_j)^{-1}\|^2\eps}{1-\|(A_n-s_j)^{-1}\|\eps}$} and \lv{$\eps\approx 16$}. The shifts are located at a certain distance to the expected pseudospectrum so as to obtain a relatively small superset thereof. The black line corresponds to the boundary of $\sigma_\eps(A_n)$ computed by eigtool, see \cite{eigtool}.
This demonstrates the result of Theorem \ref{theo:psincl-str-shift} which actually yields an enclosure for the pseudospectrum of the operator $A$ while the black line  only shows the boundary of the pseudospectrum of the approximation matrix $A_n$.
\end{example}

\lv{
As already mentioned in Remark \ref{rem:NumericalRange} we also have the enclosure
\begin{align*}
\sigma_\eps(A)\subset B_\eps(W(A))
\end{align*}
for operators $A$ with a compact resolvent.
\cw{Note that, because both sides of the enclosure are
  in terms of the same operator $A$,
  this only yields an enclosure for the discretized operator
   when applied numerically, not the full operator.}
So let us
take a look at the discretizations of the Hain-L\"ust (Figure
\ref{fig:plotHL_NR}) and the advection-diffusion operator (Figure
\ref{fig:plotAD_NR}) again.
\begin{figure}
\includegraphics[width=\textwidth]{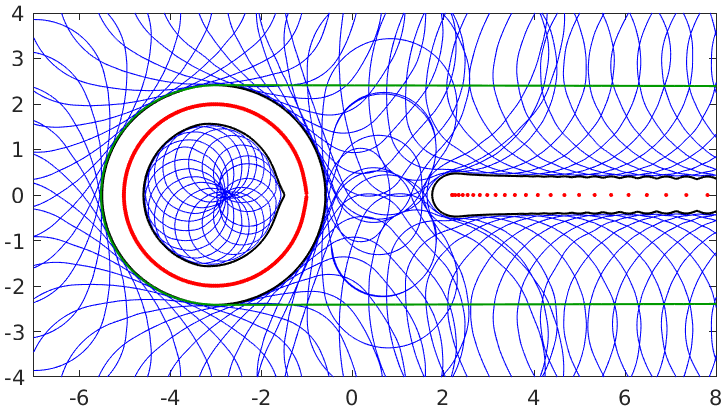}
\caption{$\eps$-neighborhood of the numerical range of the Hain-L\"ust operator}
\label{fig:plotHL_NR}
\end{figure}
Here, the $\eps$-neighborhoods of the numerical ranges are depicted by green lines. As you can see, this approach leads to a very similar result in case of the advection-diffusion operator (where the pseudospectrum is convex), while it fails to distinguish disconnected components of the pseudospectrum in case of the Hain-L\"ust operator.
\begin{figure}\centering
\includegraphics[width=\textwidth]{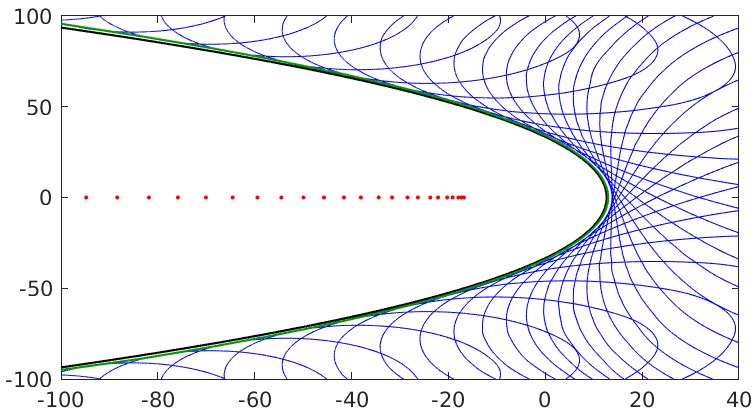}
\caption{$\eps$-neighborhood of the numerical range of the advection-diffusion operator}
\label{fig:plotAD_NR}
\end{figure}
}

\section*{Acknowledgment}
\cw{%
  We thank the anonymous referee for thoroughly reading the manuscript
  and his/her valuable remarks and suggestions.
The referee in particular pointed out an improvement
in Proposition~\ref{prop:psincl} and raised the question of optimality of the
enclosure \eqref{eq:psincl-shift}.
Ultimately, this led to
the results of Theorem~\ref{theo:optimality}.
}

\bibliographystyle{abbrv}
\bibliography{pseudo}

\end{document}